%% Delete everything above this line and pdflatex the resulting file twice. %%
%% LaTeX 2e

\RequirePackage[l2tabu, orthodox]{nag}

\documentclass[12pt]{amsart}
\usepackage{fullpage,url,amssymb,enumitem,colonequals}
\usepackage{amsfonts}
\usepackage[foot]{amsaddr}
\setenumerate{listparindent=\parindent}% for nice paragraphs
\setlist[enumerate]{label={\upshape(\arabic*)}}
\usepackage{tabularx} %for x column type
\usepackage{booktabs} % for tables
\usepackage{ragged2e}  % for '\RaggedRight' macro (allows hyphenation)
\newcolumntype{Y}{>{\RaggedRight\arraybackslash}X} 
%\usepackage{mathrsfs} % for \mathscr (script letters)

% This is for resizeable \Sha
\usepackage[OT2,T1]{fontenc}

\DeclareSymbolFont{cyrletters}{OT2}{wncyr}{m}{n}
\DeclareMathSymbol{\Sha}{\mathalpha}{cyrletters}{"58}

% Color comments!
\usepackage{color}

 % for defined terms

% Characters

\newcommand{\F}{\mathbb{F}}

\newcommand{\Q}{\mathbb{Q}}
\newcommand{\R}{\mathbb{R}}
\newcommand{\Z}{\mathbb{Z}}
\newcommand{\Qbar}{{\overline{\Q}}}

% mathcal characters
\newcommand{\calA}{\mathcal{A}}

\newcommand{\calW}{\mathcal{W}}

% Math operators

\DeclareMathOperator{\Frob}{Frob}
\DeclareMathOperator{\Gal}{Gal}

\DeclareMathOperator{\Norm}{Norm}

% Categories

% Text subscripts, superscripts

\newcommand{\GalQ}{{\Gal}(\Qbar/\Q)}

 % binary direct sum
 % direct sum of a collection

 % binary intersection
 % intersection of a collection

 % binary tensor product
 % tensor product of a collection
 % binary union
 % union of a collection

% Emphasis

\newcommand{\slantsf}[1]{\textsl{\textsf{#1}}}

%%% \numberwithin{equation}{section}
%%% \newtheorem{theorem}[equation]{Theorem} 
%%% etc.

\newtheorem{theorem}{Theorem}[section]
\newtheorem{lemma}[theorem]{Lemma}
\newtheorem{corollary}[theorem]{Corollary}
\newtheorem{proposition}[theorem]{Proposition}

\theoremstyle{definition}
\newtheorem{definition}[theorem]{Definition}

\newtheorem{conjecture}[theorem]{Conjecture}

\theoremstyle{remark}

\usepackage{microtype}

\usepackage[
%	draft,
%	colorlinks,
	backref,
	pdfauthor={Borys Kadets}, % add other authors
%	pdftitle={Paper title goes here},
]{hyperref}
\usepackage[alphabetic,backrefs,lite]{amsrefs} % for bibliography

\begin{document}

\title[On the number of rational points on simple abelian varieties over finite fields]{Estimates for the number of rational points on simple abelian varieties over finite fields}
%\subjclass[2010]{Primary 9999; Secondary 9999}
%\keywords{9999}
\author{Borys Kadets}
\thanks{This research was supported in part by Simons Foundation grant \#550033.}
\address{Department of Mathematics, Massachusetts Institute of Technology, Cambridge, MA 02139-4307, USA}
\email{bkadets@math.mit.edu}
\urladdr{\url{http://math.mit.edu/~bkadets/}}
%\date{November 2, 2017}

\begin{abstract}
Let $A$ be a simple abelian variety of dimension $g$ over the field $\F_q$. The paper provides improvements on the Weil estimates for the size of $A(\F_q)$. For an arbitrary value of $q$ we prove $(\lfloor(\sqrt{q}-1)^2 \rfloor + 1)^g \leqslant \#A(\F_q) \leqslant (\lceil(\sqrt{q}+1)^2 \rceil - 1)^{g}$ holds with finitely many exceptions. We compute improved bounds for various small values of $q$. For instance, the Weil bounds for $q=3,4$ give a trivial estimate $\#A(\F_q) \geqslant 1$; we prove $\# A(\F_3) \geqslant 1.359^g$ and $\# A(\F_4) \geqslant 2.275^g$ hold with finitely many exceptions.  We use these results to describe all abelian varieties over finite fields that have no new points in some finite field extension.
\end{abstract}

\maketitle

%****************************************************************************
\section{Introduction}\label{S:introduction}
Let $A$ be an abelian variety of dimension $g$  defined over a finite field $\F_q$. The following classical theorem of Weil gives an estimate for the size of the group $A(\F_q)$.

\begin{theorem}[Weil \cite{Weil1948-RH}]\label{Weil}
Suppose $A/\F_q$ is an abelian variety of dimension $g$. Then
\[ (\sqrt{q}-1)^{2g} \leqslant \#A(\F_q) \leqslant (\sqrt{q}+1)^{2g}.\]
\end{theorem}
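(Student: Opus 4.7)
The plan is to prove Weil's theorem by studying the characteristic polynomial of the Frobenius endomorphism acting on an $\ell$-adic Tate module. First I would fix a prime $\ell \neq \Char \F_q$ and let $F \colon A \to A$ denote the $q$-power Frobenius isogeny. Its action on $T_\ell A$ has a characteristic polynomial $P_F(T) \in \Z[T]$ of degree $2g$, independent of $\ell$; write $P_F(T) = \prod_{i=1}^{2g}(T-\alpha_i)$. The deep input I would invoke here is Weil's Riemann hypothesis for abelian varieties over finite fields: the $\alpha_i$ are algebraic integers, each satisfies $|\iota(\alpha_i)| = \sqrt{q}$ for every embedding $\iota \colon \Qbar \injects \C$, and they occur in conjugate pairs $\alpha_i \bar\alpha_i = q$ (equivalently, $P_F(T)$ satisfies the functional equation $T^{2g} P_F(q/T) = q^g P_F(T)$).

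Next I would translate $\#A(\F_q)$ into the $\alpha_i$. The identity
\[ \#A(\F_q) = \#\ker(F-1) = \deg(F-1) = P_F(1) = \prod_{i=1}^{2g}(1-\alpha_i)\]
is standard: $F-1$ is an isogeny because it is separable (its differential is $-1$, invertible), so the order of its kernel equals its degree; and for an endomorphism $\phi$ of an abelian variety the degree equals the product of the roots of the characteristic polynomial of $\phi$ acting on $T_\ell A$, suitably normalized.

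Now I would group the conjugate pairs and bound each factor. Writing the $2g$ roots as $g$ pairs $(\alpha_j, \bar\alpha_j)$ gives
\[ \#A(\F_q) = \prod_{j=1}^{g} (1-\alpha_j)(1-\bar\alpha_j) = \prod_{j=1}^{g} |1-\alpha_j|^2. \]
The reverse and ordinary triangle inequalities, combined with $|\alpha_j|=\sqrt{q}$, yield
\[ (\sqrt{q}-1)^2 \;\leqslant\; |1-\alpha_j|^2 \;\leqslant\; (\sqrt{q}+1)^2 \]
for each $j$, and multiplying these $g$ inequalities gives the desired two-sided bound on $\#A(\F_q)$.

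The only non-formal step is Weil's Riemann hypothesis itself, which is the substantive content; everything else is bookkeeping. I would not attempt to reprove that here but cite it from \cite{Weil1948-RH}. The rest of the argument is a short elementary estimate on complex numbers of prescribed modulus, so the main conceptual obstacle is entirely absorbed into the input about $|\alpha_i|=\sqrt{q}$.
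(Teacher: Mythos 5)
Your proposal is correct and is the standard proof of this estimate; the paper itself gives no argument here, citing Weil's theorem as known input, and your route (Riemann hypothesis for the Frobenius eigenvalues $|\alpha_i|=\sqrt{q}$, the identity $\#A(\F_q)=\deg(F-1)=\prod_i(1-\alpha_i)$ via separability of $F-1$, then the triangle inequality) is exactly the argument the citation stands for. One cosmetic point: when $\alpha_i=\pm\sqrt{q}$ the roots need not literally split into complex-conjugate pairs, but this is harmless since you can simply bound $\#A(\F_q)=\prod_i\lvert 1-\alpha_i\rvert$ factor by factor using $\sqrt{q}-1\leqslant\lvert 1-\alpha_i\rvert\leqslant\sqrt{q}+1$.
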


Our goal is to improve upon the estimates of Theorem \ref{Weil}.
For example, note that the lower bound is vacuous for $q=2,3,4$; our results imply an exponential lower bound in the cases $q=3,4$, while for $q=2$ there are infinitely many abelian varieties with one point as proved in \cite{Madan-Pal1977}.

By Poincar\'{e} reducibility theorem (see \cite[Theorem~1,~Section~19]{MumfordAV1970}) any abelian variety is isogenous to a product of simple abelian varieties. Since isogenies preserve point counts, it is natural to consider only simple abelian varieties. Let $\mathcal{A}_q(g)$ denote the (finite) set of isogeny classes of simple abelian varieties of dimension $g$ over $\F_q$. Let $\mathcal{A}_q$ denote the union $\mathcal{A}_q\colonequals \bigsqcup_g \mathcal{A}_q(g)$. Define the quantities $a(q), A(q)$ by the formulas
\[a(q)\colonequals \liminf_{A \in \mathcal{A}_q} \#A(\F_q)^{1/g},\ \ A(q)\colonequals \limsup_{A \in \mathcal{A}_q} \#A(\F_q)^{1/g}. \]

Serre (see \cite{SerreN_x(p)}~{Section~4.6}) noticed that for general varieties the estimates coming from the Weil conjectures can be improved using some metric properties of totally positive algebraic integers. In the case of abelian varieties, Aubry, Haloui and Lachaud \cite{Aubry-et-al2013} observed that the asymptotic behavior of $A(q)$ is related to the Schur-Siegel-Smyth trace problem. Let us briefly recall its statement.

\begin{definition}
	Suppose $\alpha$ is an algebraic integer. Let $\alpha=\alpha_1, \alpha_2, ..., \alpha_n$ denote the Galois orbit of $\alpha$. The \slantsf{normalized trace} of $\alpha$ is the average value of its Galois conjugates: $\mathrm{tr}(\alpha)\colonequals 1/n (\alpha_1 +\dots +\alpha_n).$
\end{definition}
\begin{definition}\label{SSS}
	Let $\mathrm{TP} \subset \Qbar$ denote the set of all totally positive algebraic integers. The \slantsf{Schur-Siegel-Smyth} constant $\rho$ is defined by \[\rho \colonequals \liminf_{\alpha \in \mathrm{TP}} \mathrm{tr}(\alpha).\]
\end{definition}

The following conjecture is known as the Schur-Siegel-Smyth trace problem; see \cite{Borwein2002}*{Chapter~10}.
 
 \begin{conjecture}\label{trace-problem}
 With notation as above $\rho=2$.
 \end{conjecture}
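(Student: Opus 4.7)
The conjecture $\rho = 2$ is a famous open problem, so my ``proof proposal'' really amounts to sketching the strongest known line of attack and identifying where it breaks down. The standard approach is the auxiliary polynomial method of Schur, Siegel, and Smyth. Fix a totally positive algebraic integer $\alpha$ of degree $n$ with conjugates $\alpha_1,\dots,\alpha_n$, and consider an auxiliary function of the form
\[ f(x) = x - \sum_{j=1}^r c_j \log|Q_j(x)|, \]
where the $Q_j \in \Z[x]$ are monic and the $c_j > 0$. Since $\prod_i Q_j(\alpha_i) \in \Z \setminus \{0\}$ whenever no conjugate of $\alpha$ is a root of $Q_j$, one has $\sum_i \log|Q_j(\alpha_i)| \geqslant 0$, and therefore
\[ \mathrm{tr}(\alpha) \;\geqslant\; \frac{1}{n}\sum_i f(\alpha_i) \;\geqslant\; \inf_{x > 0} f(x). \]
Outside the finite set of $\alpha$ sharing a conjugate with some $Q_j$, this gives a uniform lower bound for $\mathrm{tr}(\alpha)$, and hence for $\rho$.

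The plan would then be to carry this out in two stages. First, one chooses the polynomials $Q_j$ and weights $c_j$ so as to maximize $\inf_{x>0} f(x)$; this is a finite-dimensional optimization problem (with the integrality of the $Q_j$ making it delicate), and has been pushed by Smyth, Flammang, and others to produce bounds in the vicinity of $1.78$. Second, one eliminates the finitely many exceptions coming from shared roots by enlarging the collection of $Q_j$, using compactness and equidistribution arguments that are standard in this line of work.

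The main obstacle is a genuine barrier: Smyth, with refinements by Serre, showed that no auxiliary function of the above form can push the bound above a threshold strictly less than $2$ — a value intimately related to the integer transfinite diameter of $[0,\infty)$. Thus reaching the conjectured equality $\rho = 2$ is impossible by this class of methods alone, and would require a fundamentally new input, perhaps exploiting finer arithmetic invariants such as ramification data, class-field-theoretic constraints, or motivic/Weil-number restrictions on the minimal polynomial of $\alpha$. Finding such an input is where I expect the essential difficulty to lie, and it is exactly the reason that the problem has remained open despite sustained effort.
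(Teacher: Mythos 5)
This statement is labeled as a conjecture in the paper (the Schur--Siegel--Smyth trace problem) and is never proved there: it is only used conditionally, in the remark following Proposition \ref{rho-limit}, and the paper's sole content about $\rho$ is the observation that shifted Chebyshev polynomials give $\rho \leqslant 2$ and that the best known lower bound is $1.79193$ (Liang--Wu). So there is no proof to compare yours against, and your assessment that the statement is open and cannot be established by the classical auxiliary-polynomial method is exactly the correct posture; your sketch of that method (monic integer $Q_j$, weights $c_j$, the resultant-integrality step giving $\mathrm{tr}(\alpha) \geqslant \inf_{x>0} f(x)$ outside finitely many exceptions) is accurate and is precisely the mechanism the paper itself adapts, in multiplicative form, in Lemma \ref{auxiliary} and Theorem \ref{main} for norms rather than traces. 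One caution about your closing speculation that a new input might eventually prove the equality $\rho = 2$: work of A.~Smith subsequent to this paper (on algebraic integers with conjugates in a prescribed distribution) shows that $\rho$ is in fact strictly less than $2$, so the conjecture as stated is now known to be false, and the barrier you describe was pointing at a genuine phenomenon rather than a limitation of technique.
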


Suitably modified Chebyshev polynomials give an infinite family of totally positive algebraic integers with normalized trace $2$, so $\rho \leqslant 2$. The current best lower bound for $\rho$ is 1.79193, see \cite{Liang-Wu2011}.

The following proposition is implicit in \cite{Aubry-et-al2013}.
\begin{proposition}\label{rho-limit}
	We have
	\[\lim_{q \to \infty} (q+1)^2-A(q^2) \geqslant \rho.\]
	For every $q$ we have $(q+1)^2\leqslant A(q^2)+2+q^{-2}$.
\end{proposition}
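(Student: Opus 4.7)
The plan is to reformulate both bounds via the normalized trace of totally positive algebraic integers. For a simple abelian variety $A/\F_{q^2}$ of dimension $g$, pair up the Frobenius eigenvalues $\alpha_1, \bar\alpha_1, \ldots, \alpha_g, \bar\alpha_g$ with $\alpha_i\bar\alpha_i = q^2$, and set $\gamma_i := \alpha_i + \bar\alpha_i + 2q \in [0, 4q]$. The $\gamma_i$ are Galois conjugates of a single totally non-negative algebraic integer $\gamma_A$ (with uniform multiplicity if $\chi_A = m_A^e$), and $\#A(\F_{q^2}) = \prod_i ((q+1)^2 - \gamma_i)$. Moreover $\gamma_A$ is totally positive unless $-q$ is a Frobenius eigenvalue, which occurs for only finitely many isogeny classes.

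For the first inequality, AM-GM yields $\#A(\F_{q^2})^{1/g} \leqslant (q+1)^2 - \mathrm{tr}(\gamma_A)$. By the very definition $\rho = \liminf_{\alpha \in \mathrm{TP}}\mathrm{tr}(\alpha)$, for every $\epsilon > 0$ only finitely many $\gamma \in \mathrm{TP}$ (and hence finitely many $A \in \mathcal{A}_{q^2}$) satisfy $\mathrm{tr}(\gamma_A) \leqslant \rho - \epsilon$. Passing to $\limsup$ and letting $\epsilon \to 0$ gives $A(q^2) \leqslant (q+1)^2 - \rho$, i.e.\ $(q+1)^2 - A(q^2) \geqslant \rho$ uniformly in $q$; the limit statement is then immediate.

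For the second inequality, I use Smyth's family: for $n$ with $2n+1$ prime, let $\gamma_n$ be the totally positive algebraic integer of degree $n$ with Galois conjugates $4\sin^2(k\pi/(2n+1))$, $k=1,\ldots,n$, so that $\mathrm{tr}(\gamma_n) = 2 + 1/n$. Since $\gamma_n(\gamma_n-4q)$ is totally negative, $K_n := \Q(\gamma_n)(\sqrt{\gamma_n(\gamma_n-4q)})$ is a CM field, and the root $\alpha_n$ of $T^2 - (\gamma_n - 2q)T + q^2$ in $K_n$ is a Weil $q^2$-number. By Honda-Tate, $\alpha_n$ determines a simple isogeny class $A_n/\F_{q^2}$, and one checks that $\#A_n(\F_{q^2})^{1/\dim A_n} = \prod_{k=1}^n ((q+1)^2 - 4\sin^2(k\pi/(2n+1)))^{1/n}$ (any multiplicity $e$ in $\chi_{A_n} = m_{A_n}^e$ cancels on taking the $\dim A_n$-th root). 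A Riemann-sum argument, the substitution $\phi = 2\theta$, and the Mahler-measure identity $\int_0^\pi \log(A + 2\cos\phi)\,d\phi = \pi\log\frac{A + \sqrt{A^2-4}}{2}$ (for $A > 2$, via Jensen's formula) combine to give $\lim_n \#A_n(\F_{q^2})^{1/\dim A_n} = B$, where $A = (q+1)^2 - 2$ and $B = (A + \sqrt{A^2-4})/2$. Squaring the equivalent $\sqrt{A^2 - 4} \geqslant A - 2q^{-2}$ reduces to $A \geqslant q^2 + q^{-2}$, i.e.\ $2q - 1 \geqslant q^{-2}$, which holds for $q \geqslant 1$; thus $B \geqslant (q+1)^2 - 2 - q^{-2}$ and $A(q^2) \geqslant B \geqslant (q+1)^2 - 2 - q^{-2}$. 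I expect the main difficulty to be the Riemann-sum and Jensen's-formula evaluation of the integral; the Honda-Tate step is routine, and the sharp constant $q^{-2}$ falls out of the explicit squaring.
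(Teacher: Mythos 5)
Your proof is correct, but it proceeds by different key computations than the paper, which establishes this proposition through Propositions \ref{Chebyshev} and \ref{strong-rho-limit}. For the upper bound on $A(q^2)$, the paper Taylor-expands $\log$ around $(q+1)^2$ and averages over conjugates, which produces the bound $(q+1)^2-\rho+O(q^{-1})$ and so recovers the statement only in the limit $q\to\infty$; your AM--GM inequality applied to the factors $(q+1)^2-\gamma_i\geqslant (q-1)^2>0$ gives $\#A(\F_{q^2})^{1/g}\leqslant (q+1)^2-\mathrm{tr}(\gamma_A)$ exactly, hence the cleaner uniform bound $A(q^2)\leqslant (q+1)^2-\rho$ for every $q$, with no error term. (Two small points you should make explicit in a full write-up: the ``all but finitely many'' reading of the $\liminf$ defining $\rho$, and the fact that $A\mapsto\gamma_A$ is finite-to-one -- each Galois orbit of $\gamma$ yields only the roots of $T^2-(\sigma(\gamma)-2q)T+q^2$ as candidate Weil numbers, hence finitely many isogeny classes -- so that discarding finitely many $\gamma$'s discards only finitely many $A$'s; your handling of the single class with $\gamma_A=0$, i.e.\ Frobenius eigenvalue $-q$, is the right fix for total positivity.) For the lower bound, the paper shifts Chebyshev polynomials $T_\ell(x/2-1)$, factors out $x-2$ to get irreducibility, and evaluates $|P_\ell(-N)|^{1/\ell}$ by the closed-form expression for $T_n$; you instead use the cyclotomic family $4\sin^2(k\pi/(2n+1))$ (degree automatic from irreducibility of cyclotomic polynomials, no factoring trick needed) and evaluate the limiting geometric mean by equidistribution plus the Jensen/Mahler-measure integral $\tfrac1\pi\int_0^\pi\log(A+2\cos\phi)\,d\phi=\log\tfrac{A+\sqrt{A^2-4}}{2}$, which gives $A(q^2)\geqslant \tfrac{A+\sqrt{A^2-4}}{2}$ with $A=(q+1)^2-2$, and your elementary squaring step correctly shows this exceeds $(q+1)^2-2-q^{-2}$ for all $q\geqslant 1$. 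The two constructions are morally the same (both families equidistribute on $[0,4]$ with respect to the right measure), but your version avoids the auxiliary Lemma \ref{Chebyshev-lemma} and even yields a slightly sharper constant, while the paper's Chebyshev construction has the advantage of shifting by an arbitrary integer $N$, which is what it needs for non-square $q$ in Proposition \ref{Chebyshef}.
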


Assuming Conjecture \ref{trace-problem}, from Proposition \ref{rho-limit} we get $A(q^2)=(q+1)^2-2+o(1)$ as $q \to \infty$. Even though the exact values of $a(q), A(q)$ seem hard to determine, the following proposition shows that they are not far from the Weil bounds.

\begin{proposition}\label{Chebyshef}
For every prime power $q$ the following inequalities hold:
\[(\sqrt{q}-1)^2 \leqslant a(q) \leqslant \lceil(\sqrt{q}-1)^2\rceil + 2,  \]
\[\lfloor(\sqrt{q}+1)^2\rfloor - 2 - q^{-1} \leqslant A(q) \leqslant (\sqrt{q}+1)^2.\]
\end{proposition}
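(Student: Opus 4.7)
The outer inequalities $(\sqrt{q}-1)^2 \leq a(q)$ and $A(q) \leq (\sqrt{q}+1)^2$ are immediate consequences of Theorem~\ref{Weil}: every simple $A$ of dimension $g$ satisfies $\#A(\F_q)^{1/g}\in[(\sqrt q-1)^2,(\sqrt q+1)^2]$, so the liminf and limsup automatically lie in this interval.

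For the inner inequalities, the plan is to build two infinite families of simple abelian varieties, one for each bound, in the spirit of \cite{Aubry-et-al2013}. By Honda-Tate, a Weil $q$-number $\pi$ with minimal polynomial $P_\pi\in\Z[x]$ of degree $2d$ determines a simple isogeny class $A_\pi$ satisfying
\[
\#A_\pi(\F_q)^{1/\dim A_\pi} = P_\pi(1)^{1/d},
\]
an identity insensitive to the Brauer index controlling $\dim A_\pi$. I construct $\pi$ from a totally positive algebraic integer $\eta$ whose conjugates $\eta_1,\ldots,\eta_d$ all lie in $[0,4]$. Setting $N=\lfloor 2\sqrt q\rfloor$, the Weil number $\pi$ with $\pi+q/\pi = -N+\eta$ has minimal polynomial $\prod_i\bigl(x^2+(N-\eta_i)x+q\bigr)$ and $P_\pi(1)=\prod_i(M-\eta_i)$ where $M=\lfloor(\sqrt q+1)^2\rfloor$; this targets the upper Weil bound. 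Symmetrically, $\pi+q/\pi = N-\eta$ produces $\prod_i\bigl(x^2-(N-\eta_i)x+q\bigr)$ and $P_\pi(1)=\prod_i(M'+\eta_i)$ with $M'=\lceil(\sqrt q-1)^2\rceil$, targeting the lower Weil bound. Irreducibility of $P_\pi$ reduces to irreducibility of the minimal polynomial of $\eta$, once one observes that $\pi$ is non-real so complex conjugation preserves each quadratic factor and the Galois group acts transitively on the $2d$ roots.

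I then feed in the Chebyshev-type family $\eta_n=2+2\cos(2\pi/n)$ for $n$ an odd prime: this is totally positive, of degree $d=(n-1)/2$, with conjugates in $(0,4)$ and normalized trace $\bar\eta_n=2-2/(n-1)\to 2$. For $a(q)$, a direct AM--GM estimate on $\prod(M'+\eta_i)$ gives $\#A_n^{1/\dim A_n}\leq M'+\bar\eta_n\to M'+2$, yielding $a(q)\leq\lceil(\sqrt q-1)^2\rceil+2$. For $A(q)$ the analogous inequality would go the wrong way, so I instead bound the geometric mean $\bigl(\prod(M-\eta_i)\bigr)^{1/d}$ from below via the second-order expansion
\[
\frac{1}{d}\sum_{i=1}^d\log(M-\eta_i)=\log M-\frac{\bar\eta}{M}-\frac{\overline{\eta^2}}{2M^2}+O(M^{-3}),
\]
after computing $\overline{\eta_n^2}\to 6$ (equivalently $\mathrm{Var}(\eta_n)\to 2$) by expanding $\cos^2\theta=(1+\cos 2\theta)/2$ and invoking Ramanujan sums. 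Exponentiating gives $\lim_n\#A_n^{1/\dim A_n}=M-2-1/M+O(M^{-2})$, which exceeds $M-2-q^{-1}$ since $M\geq q+1$.

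The main obstacle is extracting the sharp constant $1$ in front of $q^{-1}$: a one-sided AM--GM or a first-order log expansion only delivers an error of the form $C/q$ with $C>1$, and the improvement to $C=1$ comes precisely from the second-order moment $\overline{\eta_n^2}\to 6$ of the Chebyshev family. Routine secondary checks include verifying that each conjugate of $y_i=\pi_i+\bar\pi_i$ satisfies $|y_i|<2\sqrt q$ strictly so that $\pi$ is a genuine Weil number (which is automatic for $q\geq 2$ and $n\geq 3$ since $\eta_{n,i}\in(0,4)$), together with the confirmation that Honda-Tate applies.
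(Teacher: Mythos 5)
Your overall strategy is the same as the paper's: the outer inequalities come from Theorem \ref{Weil}, and the inner ones come from exhibiting, via Honda--Tate, an infinite family of simple isogeny classes built from totally positive integers with conjugates in $[0,4]$ and normalized trace tending to $2$, shifted to the appropriate end of $[(\sqrt q-1)^2,(\sqrt q+1)^2]$. The difference is in execution: the paper works on the totally real side (its Proposition \ref{Honda-Tate}) with the shifted Chebyshev polynomials $T_\ell(x/2-1)$ and evaluates the relevant norm \emph{exactly} through the closed formula for $T_n$ (Lemma \ref{Chebyshev-lemma}), getting the clean two-sided bound $N+2-1/N\leqslant \lim |P_n(-N)|^{1/n}\leqslant N+2$; you work on the Weil-number side with the cyclotomic family $\eta_n=2+2\cos(2\pi/n)$ and estimate the geometric mean by AM--GM (for $a(q)$) and by a second-order $\log$/moment expansion (for $A(q)$). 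Your Weil-number and irreducibility checks, the identity $\#A(\F_q)^{1/\dim A}=P_\pi(1)^{1/d}$ independent of the Brauer index, the identifications $q+1+\lfloor 2\sqrt q\rfloor=\lfloor(\sqrt q+1)^2\rfloor$ and $q+1-\lfloor2\sqrt q\rfloor=\lceil(\sqrt q-1)^2\rceil$, and the moments $\bar\eta_n\to2$, $\overline{\eta_n^2}\to6$ are all correct, and the $a(q)$ half is complete as written.

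The one genuine soft spot is the final step of the $A(q)$ half. The proposition claims an exact inequality for \emph{every} prime power $q$, but your conclusion rests on unspecified $O(M^{-2})$ and $O(M^{-3})$ constants plus the justification ``since $M\geqslant q+1$''. That hypothesis alone does not suffice: if one only knew $M\geqslant q+1$, then $1/q-1/M$ could be of order $1/M^2$, i.e.\ comparable to the discarded error (whose leading term works out to about $-2/M^2$), so the comparison with $M-2-q^{-1}$ would not follow, and for small $q$ the implicit constants matter anyway. The repair is easy: use the exact value $M=q+1+\lfloor2\sqrt q\rfloor$, so $1/q-1/M=(1+\lfloor2\sqrt q\rfloor)/(qM)$, which dominates the error once the constants are written out; or, better, avoid the expansion altogether by computing the limit of the geometric mean in closed form --- your $\eta_{n,i}$ equidistribute exactly like the Chebyshev roots, so the limit equals $\tfrac12\bigl((M-2)+\sqrt{(M-2)^2-4}\bigr)$, which is precisely what Lemma \ref{Chebyshev-lemma} encodes and which gives $\geqslant M-2-1/(M-4)\geqslant M-2-q^{-1}$ for $q\geqslant 3$, the case $q=2$ being the numerical check $(3+\sqrt5)/2\geqslant 5/2$. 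With that adjustment your argument is a complete proof.
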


Even though the Weil bounds cannot be significantly strengthened for large values of $q$, it is still interesting to get some improvements. The following theorem of Aubry, Haloui and Lachaud gives one such improvement.

\begin{theorem}{\cite{Aubry-et-al2013}*{Corollaries~2.2~and~2.14}}\label{AHL}
For any prime power $q$ the following inequalities hold:
\[	a(q) \geqslant \lceil(\sqrt{q}-1)^2 \rceil, \ \ A(q) \leqslant \lfloor(\sqrt{q}+1)^2 \rfloor . \]                                             
\end{theorem}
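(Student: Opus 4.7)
The plan is to express $\#A(\F_q)^{1/g}$ as the $d$-th root of a norm $N_{K/\Q}(\gamma)$ for a suitable totally real algebraic integer $\gamma$, and then to combine the integrality of norms of totally positive algebraic integers with a Jensen-style convexity inequality. Let $A/\F_q$ be simple of dimension $g$ and $\pi$ its Frobenius. Since $A$ is simple, the Weil polynomial is a power $m(t)^e$ of the minimal polynomial $m$ of $\pi$, with $e\cdot\deg m = 2g$. Ignoring the degenerate case $\pi=\pm\sqrt{q}$ (which forces $q$ to be a perfect square, and where direct computation yields $\#A(\F_q)^{1/g} = (\sqrt{q}\mp 1)^2$, saturating the bound), the Riemann hypothesis pairs roots of $m$ as $\alpha\leftrightarrow q/\alpha$, so $\beta\colonequals \pi+q/\pi$ generates a totally real subfield $K\subset\Q(\pi)$ of degree $d=\tfrac12\deg m$, with $de=g$. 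A routine expansion gives
\[
\#A(\F_q) \;=\; m(1)^e \;=\; \bigl(N_{K/\Q}(\gamma)\bigr)^e, \qquad \gamma\colonequals q+1-\beta,
\]
so $\#A(\F_q)^{1/g} = N_{K/\Q}(\gamma)^{1/d}$, and every conjugate $\gamma_i$ lies in $[(\sqrt{q}-1)^2,(\sqrt{q}+1)^2]$.

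Set $c\colonequals\lceil(\sqrt{q}-1)^2\rceil$. Each $\gamma_i$ satisfies $\gamma_i > c-1$: if $q$ is a perfect square then $(\sqrt{q}-1)^2 = c$ and $\gamma_i\geqslant c$, while otherwise $(\sqrt{q}-1)^2$ is irrational and lies strictly between $c-1$ and~$c$. Hence $\eta\colonequals\gamma-(c-1) = q+2-c-\beta$ is a totally positive algebraic integer of~$K$, and its norm $N_{K/\Q}(\eta) = \prod_i(\gamma_i-(c-1))$ is a positive rational integer, hence at least~$1$.

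The final ingredient is the elementary convexity lemma: if positive reals $x_1,\dots,x_d$ satisfy $\prod x_i\geqslant 1$, then $\prod(x_i+a)\geqslant(a+1)^d$ for every $a\geqslant 0$, and symmetrically $\prod(b-x_i)\leqslant (b-1)^d$ whenever $x_i<b$. Writing $z_i = \log x_i$, the first follows from convexity and monotonicity of $\phi(z)=\log(e^z+a)$ via Jensen: $\sum\phi(z_i)\geqslant d\,\phi\bigl(\tfrac{1}{d}\sum z_i\bigr)\geqslant d\,\phi(0) = d\log(1+a)$; the second is the analogous argument with the concave decreasing $z\mapsto\log(b-e^z)$. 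Applied with $x_i = \gamma_i-(c-1)$ and $a=c-1$, the first inequality upgrades $N_{K/\Q}(\eta)\geqslant 1$ to $N_{K/\Q}(\gamma)\geqslant c^d$, yielding $\#A(\F_q)^{1/g}\geqslant c$. The upper bound $A(q)\leqslant\lfloor(\sqrt{q}+1)^2\rfloor$ is perfectly symmetric: the element $\beta-(q-\lfloor(\sqrt{q}+1)^2\rfloor)$ is totally positive, and the second inequality (with $b = \lfloor(\sqrt{q}+1)^2\rfloor+1$ and $x_i = b-\gamma_i$) gives the matching ceiling.

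The conceptual heart of the proof is the choice of shift: $c-1$ is the largest integer that can be subtracted from $\gamma$ while keeping it totally positive, and Jensen's inequality is what promotes the coarse bound $N(\eta)\geqslant 1$ to the sharp $N(\gamma)\geqslant c^d$. I do not foresee serious technical obstacles beyond verifying the two Jensen-type inequalities and dispatching the edge case $\pi=\pm\sqrt{q}$.
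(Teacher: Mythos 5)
Your proof is correct and is essentially the paper's own method: the reduction $\#A(\F_q)^{1/g}=N_{K/\Q}(\gamma)^{1/d}$ with $\gamma=(1-\pi)(1-\bar\pi)$ is Proposition \ref{Honda-Tate}, and your shift-by-the-largest-admissible-integer plus integrality-of-the-norm step, with Jensen's inequality playing the role of the explicit auxiliary function $x/(x-n)^{1/(n+1)}$, is exactly how the paper proves the (slightly stronger) Theorem \ref{AHL+}. The one slip is your edge case: $\pi=\pm\sqrt{q}$ does not force $q$ to be a perfect square --- for nonsquare $q$ these are irrational real Weil numbers corresponding to a simple supersingular surface with $\#A(\F_q)^{1/g}=q-1$ rather than $(\sqrt{q}\mp1)^2$ --- but these finitely many isogeny classes satisfy the stated bounds anyway (and cannot influence the liminf/limsup defining $a(q)$ and $A(q)$), so the conclusion stands.
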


We will derive improved estimates for $a(q)$ and $A(q)$ without using metric properties of traces and focusing on the case of small values of $q$. As a demonstration of the method for arbitrary $q$ we give a simple proof of a stronger version of Theorem \ref{AHL}.

\begin{theorem}\label{easy-bounds}
	For any prime power $q$ the following inequalities hold:
	\[	a(q) \geqslant \lfloor(\sqrt{q}-1)^2 \rfloor + 1, \ \ A(q) \leqslant \lceil(\sqrt{q}+1)^2 \rceil - 1 . \]                                             
\end{theorem}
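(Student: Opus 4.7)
The plan is to parametrize $\#A(\F_q)$ via the minimal polynomial of the Frobenius trace and then exploit integrality at a cleverly chosen integer. For simple $A\in\mathcal A_q$ with Frobenius characteristic polynomial $P_A=h^e$ ($h\in\Z[x]$ irreducible), the degenerate cases $h=x\pm\sqrt q$ (supersingular elliptic curves, existing only for $q$ a square) contribute at most two isogeny classes per $q$ and therefore do not affect $a(q)$ or $A(q)$; the case $h=x^2-q$ ($q$ non-square) gives $\#A(\F_q)=(q-1)^g$, which lies in $[M^g,M_+^g]$ for every $q\geqslant 2$. In the remaining (non-degenerate) case the roots of $h$ are non-real Weil $q$-numbers that pair into $s\colonequals\deg(h)/2$ pairs $\{\alpha_j,q/\alpha_j\}$; the totally real algebraic integers $t_j\colonequals\alpha_j+q/\alpha_j\in(-2\sqrt q,2\sqrt q)$ form one Galois orbit with common minimal polynomial $m\in\Z[x]$ of degree $s$, yielding $h(1)=\prod_{j=1}^s(q+1-t_j)=m(q+1)$ and $\#A(\F_q)=m(q+1)^e$ with $g=es$.

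Set $K\colonequals\lceil 2\sqrt q\rceil\in\Z$, so that $K\geqslant 2\sqrt q$. Writing $M\colonequals\lfloor(\sqrt q-1)^2\rfloor+1$ and $M_+\colonequals\lceil(\sqrt q+1)^2\rceil-1$, the identities $M-1=q+1-K$ and $M_++1=q+K+1$ follow at once from $\lfloor n+x\rfloor=n+\lfloor x\rfloor$ for $n\in\Z$. Define $u_j\colonequals K-t_j$ and $w_j\colonequals K+t_j$; both are strictly positive because $|t_j|<2\sqrt q\leqslant K$ in the non-degenerate case. Consequently $\prod_j u_j=m(K)$ and $\prod_j w_j=(-1)^s m(-K)$ are positive integers, hence each is at least $1$.

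The theorem now reduces to an elementary inequality: for positive reals $x_1,\dots,x_s$ with $\prod_j x_j\geqslant 1$, any $c\geqslant 0$, and any $d>\max_j x_j$,
\[
\prod_j(c+x_j)\geqslant(c+1)^s\qquad\text{and}\qquad\prod_j(d-x_j)\leqslant(d-1)^s.
\]
Both follow from Jensen's inequality applied with $v_j=\log x_j$: the function $v\mapsto\log(c+e^v)$ is convex and increasing, $v\mapsto\log(d-e^v)$ is concave and decreasing (check second derivatives), and the hypothesis gives $\tfrac{1}{s}\sum v_j\geqslant 0$. Substituting $(c,x_j)=(M-1,u_j)$ and $(d,x_j)=(M_++1,w_j)$ yields $M^s\leqslant m(q+1)\leqslant M_+^s$; raising to the $e$-th power gives the theorem.

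The one conceptual step is choosing the substitution $K=\lceil 2\sqrt q\rceil$: this is precisely the integer making $u_j,w_j$ strictly positive algebraic integers (so their norms $m(\pm K)$ are at least $1$ in absolute value) while simultaneously forcing the Jensen step to land on the sharp constants $M,M_+$. The same mechanism is what produces the $+1$ improvement over Theorem~\ref{AHL} when $q$ is a perfect square: in that case $K=2\sqrt q$ is itself integral, so the inequalities $K\geqslant 2\sqrt q$ and $K\in\Z$ become tight simultaneously.
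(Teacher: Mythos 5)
Your proof is correct and is essentially the paper's own argument (the proof of Theorem~\ref{AHL+}, from which Theorem~\ref{easy-bounds} follows after discarding the finitely many exceptional classes): under the substitution $\alpha = q+1-t$ your shifted traces $u_j = K - t_j$ with $K=\lceil 2\sqrt q\,\rceil$ are exactly the paper's $\alpha_i - n$ with $n=\lfloor(\sqrt q-1)^2\rfloor$, and the key step in both is that this shift produces a totally positive algebraic integer whose norm is a rational integer $\geqslant 1$. The only cosmetic difference is that you convert that integrality into the bound via Jensen's inequality, where the paper minimizes the single-variable auxiliary function $x/(x-n)^{1/(n+1)}$ and multiplies over conjugates.
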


Theorem \ref{easy-bounds} is equivalent to Theorem \ref{AHL} when $q$ is not a square.  For small values of $q$ we obtain the following result.

\begin{theorem}
For $q=2,3,4,5,7,8,9$ the upper and lower bounds on $a(q)$ and $A(q)$ are given in Table \ref{intro:bounds}.
	\begin{table}[h]
	\begin{center}
		\begin{tabular*}{94pt}{c|cc}
			\toprule
			q  & a(q) & A(q)\\
			\midrule
			2 & 1 & 4.035\\
			3 & 1.359 &5.634\\
			4 & 2.275& 7.382\\
			5 & 2.7 &8.835\\
			7 & 3.978 & 11.734\\
			8 & 4.635& 13.05\\
			9 & 5.47& 14.303\\
			\bottomrule
		\end{tabular*}\caption{Lower and upper bounds on $a(q)$ and $A(q)$, respectively.}\label{intro:bounds} 
	\end{center}
	
\end{table}
\end{theorem}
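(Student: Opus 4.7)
For a simple abelian variety $A/\F_q$ of dimension $g$, the characteristic polynomial of Frobenius equals $h(T)^e$ for an irreducible $h \in \Z[T]$; outside a few real-eigenvalue edge cases (namely $h(T) = T \pm \sqrt{q}$ when $q$ is a square, and $h(T)=T^2-q$ otherwise), one has
\[
h(T) \;=\; \prod_{i=1}^{n} (T^2 - \beta_i T + q),
\]
where $\beta$ is a totally real algebraic integer of degree $n$ with Galois conjugates $\beta_1,\ldots,\beta_n$ in the Weil interval $[-2\sqrt{q}, 2\sqrt{q}]$, and $g = en$. Evaluating at $T=1$ gives
\[
\#A(\F_q)^{1/g} \;=\; \biggl(\prod_{i=1}^{n}(q+1-\beta_i)\biggr)^{\!1/n},
\]
so $\log \#A(\F_q)^{1/g}$ is the Galois average of $f(x) \colonequals \log(q+1-x)$. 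Bounding $a(q)$ below and $A(q)$ above reduces to bounding Galois averages of $f$ over totally real algebraic integers with every conjugate in $[-2\sqrt{q},2\sqrt{q}]$.

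\textbf{Auxiliary polynomial method.} Following the Schur--Siegel--Smyth framework, I would search for finitely many $P_1,\ldots,P_k \in \Z[T]$ and nonnegative reals $c_1,\ldots,c_k$ satisfying the pointwise inequality
\[
\log(q+1-x) \;\geq\; \log C_{\mathrm{low}}(q) + \sum_{j=1}^k c_j \log|P_j(x)|, \qquad x \in [-2\sqrt{q}, 2\sqrt{q}].
\]
Whenever the minimal polynomial $M_\beta$ is coprime to each $P_j$, the product $\prod_i P_j(\beta_i) = \Res(M_\beta, P_j)$ is a nonzero rational integer, so $\tfrac{1}{n}\sum_i \log|P_j(\beta_i)| \geq 0$; averaging the pointwise bound over the Galois orbit then yields $\#A(\F_q)^{1/g} \geq C_{\mathrm{low}}(q)$. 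An analogous inequality $\log(q+1-x) \leq \log C_{\mathrm{up}}(q) - \sum d_j \log|Q_j(x)|$ with $d_j \geq 0$ produces the upper bounds on $A(q)$. For each $q$ appearing in Table \ref{intro:bounds} I would enumerate candidate auxiliary polynomials (for instance monic integer polynomials whose roots lie in the Weil interval, such as minimal polynomials of small-height totally real algebraic integers, together with products and shifts thereof) up to a degree cutoff, and solve the resulting linear program maximizing $\log C_{\mathrm{low}}(q)$ or minimizing $\log C_{\mathrm{up}}(q)$.

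\textbf{Exceptions and the main obstacle.} The argument fails only when $\beta$ is a root of some $P_j$ or $Q_j$, which confines $\beta$ to the finite set of roots of the chosen auxiliary polynomials; for each such $\beta$ Honda--Tate theory leaves only finitely many possibilities for $e$, hence finitely many isogeny classes total, consistent with $a(q)$ and $A(q)$ being a $\liminf$ and $\limsup$. I would handle these residual $\beta$'s by computing $\prod(q+1-\beta_i)^{1/n}$ directly from their minimal polynomials, together with the real-eigenvalue edge cases listed above (which give the explicit values $\#A(\F_q)^{1/g} \in \{q-1,\, q+1,\, (\sqrt{q}\pm 1)^2\}$). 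The principal obstacle is step two: the numerical quality of the bounds is entirely determined by the choice of auxiliary polynomials, and finding optimal $(P_j, c_j)$ is a version of the Schur--Siegel--Smyth trace problem, which is notoriously difficult to push to its asymptotic optimum. For the moderate numerical targets of Table \ref{intro:bounds}, however, I would expect a computer search over low-degree candidates to suffice.
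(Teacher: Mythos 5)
Your proposal is correct and follows essentially the same route as the paper: the paper likewise converts the count to a Galois-averaged norm of a totally real integer (it uses $\alpha=(1-\gamma)(1-\bar{\gamma})=q+1-\beta$ on $[(\sqrt{q}-1)^2,(\sqrt{q}+1)^2]$, a harmless shift of your variable), proves exactly your auxiliary-polynomial inequality as Lemma~\ref{auxiliary} using that $\prod_i P_j(\alpha_i)$ is a nonzero integer, finds the $P_j$ and exponents by a linear-programming search over minimal polynomials of small-norm elements, and treats the finitely many $\alpha$ hitting roots of the auxiliary polynomials as the listed exceptions. The only difference is that the paper supplies the explicit polynomials and exponents (Tables~\ref{a(q)} and~\ref{alphas}) that your proposed computer search would have to produce to certify the exact constants in Table~\ref{intro:bounds}.
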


Madan and S\"at \cite{Madan-Pal1977} give an explicit list of all isogeny classes of simple abelian varieties over $\F_2$ with $\#A(\F_2)=1$. We do not know if for some $k>1$ there are infinitely many simple abelian varieties with $\#A(\F_2)=k$.

\section{Abelian varieties over large fields}
We use an explicit description of the set $\mathcal{A}_q$ of isogeny classes of simple abelian varieties provided by the Honda--Tate correspondence (see, for example \cite{Waterhouse1969}). Recall that by Honda-Tate, the set $\mathcal{A}_q$ is in bijection with the set $\calW_q$ of Galois orbits of $q$-Weil numbers. For our purposes it is more convenient to use an equivalent description of the set $\calA_q$ in terms of certain totally real algebraic integers. 

\begin{proposition}\label{Honda-Tate}
	Let $\mathcal{A}_q'$ denote the set of Galois orbits of totally real algebraic integers $\alpha$ such that $\alpha$ and all of its Galois conjugates lie in the interval $\left[ (\sqrt{q}-1)^2, (\sqrt{q}+1)^2 \right]$. Then there is a bijection $\phi: \mathcal{A}_q' \to \mathcal{W}_q$ such that for every $\alpha\in \Qbar$, with $\GalQ \alpha \in \mathcal{A}_q'$ the abelian variety $A$ corresponding to $\phi(\GalQ\alpha)$ under the Honda-Tate correspondence satisfies
	\[\left(\Norm \alpha \right)^{1/\deg \alpha} = \Big(\#A(\F_q)\Big)^{1/\dim A}\]
\end{proposition}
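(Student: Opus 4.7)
The plan is to realise $\phi$ as an explicit reparametrisation of the Honda--Tate correspondence via the change of variable $\pi\mapsto q+1-\pi-q/\pi$. Recall that by Honda--Tate, in the refined form due to Waterhouse, an isogeny class of simple abelian variety $A$ over $\F_q$ corresponds to a Galois orbit of a $q$-Weil number $\pi$; writing $K=\Q(\pi)$ and letting $e$ denote the Schur index of $\End^0(A)$ over $K$, one has
\[\#A(\F_q)=\Norm_{K/\Q}(1-\pi)^e,\qquad 2\dim A=e[K:\Q].\]
I would define $\phi$ on Galois orbits by sending $\alpha\in\calA_q'$ to the pair of roots of the quadratic $x^2-(q+1-\alpha)x+q\in\Z[\alpha][x]$: the condition that every conjugate of $\alpha$ lies in $[(\sqrt q-1)^2,(\sqrt q+1)^2]$ is equivalent to this quadratic having non-positive discriminant at each archimedean place, which forces both roots to have absolute value $\sqrt q$ under every embedding, so they form a Galois orbit of $q$-Weil numbers. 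The inverse sends a Weil number $\pi$ to $\alpha\colonequals q+1-\pi-q/\pi$: when $\pi\notin\R$ the involution $\pi\mapsto q/\pi$ is complex conjugation on the CM field $K$ and $\alpha$ lies in the maximal totally real subfield $K^+$; when $\pi=\pm\sqrt q$, $K$ is already totally real. That these constructions are mutually inverse on Galois orbits is a formal check.

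To match point counts in the generic case $\pi\notin\R$, the key computation is
\[\Norm_{K/K^+}(1-\pi)=(1-\pi)(1-q/\pi)=q+1-\pi-q/\pi=\alpha,\]
giving $\Norm_{K/\Q}(1-\pi)=\Norm_{K^+/\Q}(\alpha)=\Norm\alpha$. Since $[K:K^+]=2$ implies $\dim A=e\deg\alpha$, one obtains
\[\#A(\F_q)^{1/\dim A}=(\Norm\alpha)^{e/(e\deg\alpha)}=(\Norm\alpha)^{1/\deg\alpha}.\]

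The main obstacle is the exceptional real case $\pi=\pm\sqrt q$, where $K=K^+$, the involution $\pi\mapsto q/\pi$ is trivial, and the factorisation above degenerates. Here I would instead use the direct identity $\alpha=q+1-2\pi=(1-\pi)^2$, which gives $\Norm\alpha=\Norm_{K/\Q}(1-\pi)^2$; together with $e=2$ and $\dim A=\deg\alpha$ in this case (both of which follow from the Honda--Tate description of the supersingular isogeny classes), the same identity $\#A(\F_q)^{1/\dim A}=(\Norm\alpha)^{1/\deg\alpha}$ drops out. These orbits correspond precisely to Galois orbits of $\alpha$ meeting the endpoints of the interval, and giving them a uniform treatment alongside the generic case is the only genuine subtlety in the proof.
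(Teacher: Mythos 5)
Your proposal is correct and follows essentially the same route as the paper: the same change of variable $\alpha=(1-\pi)(1-q/\pi)=q+1-\pi-q/\pi$, the inverse via the quadratic $x^2-(q+1-\alpha)x+q$ with nonpositive discriminant, and the point count via the Honda--Tate/Waterhouse identities $\#A(\F_q)=\Norm_{K/\Q}(1-\pi)^e$, $2\dim A=e[K:\Q]$. The only difference is presentational: you treat the real Weil numbers $\pi=\pm\sqrt q$ as a separate case, whereas the paper's formulation $\#A(\F_q)^{1/2g}=\Norm(1-\gamma)^{1/\deg\gamma}=(\Norm\alpha)^{1/2\deg\alpha}$ absorbs these degenerate orbits uniformly.
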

\begin{proof}
	Given a $q$-Weil number $\gamma$, define the algebraic integer $\alpha$ by $\alpha \colonequals (1-\gamma)(1-\overline{\gamma})=1+q-\gamma-q/\gamma$. The integer $\alpha$ is totally positive. Since $|\gamma|=\sqrt{q}$ and $\alpha = 1 + q + 2\mathrm{Re}(\gamma)$, we conclude that $\alpha$ and all of its conjugates belong to the segment $\left[1+q-2\sqrt{q}, 1+q+2\sqrt{q}\right]$.
	
	Let $\psi\colon \mathcal{W}_q \to \mathcal{A}_q'$ be the map $\gamma \mapsto (1-\gamma)(1-\overline{\gamma})$. We claim that $\psi$ is a bijection. Given $\alpha \in \Qbar$, with $\GalQ \alpha \in \mathcal{A}_q',$ a root of $x+q/x = 1+q-\alpha$ is a $q$-Weil number (the discriminant of this quadratic equation is $(\alpha-1-q)^2-4q \leqslant 0$, and the product of its roots is $q$). This defines a map $\phi: \mathcal{A}'_q \to \mathcal{W}_q$ which is inverse to $\psi$. 
	
	If $A$ is an abelian variety corresponding to the $q$-Weil number $\gamma$, and $\alpha\in \phi(\GalQ\gamma)$, then $\#A(\F_q)^{1/2g}=\left(\Norm (1-\gamma) \right)^{1/\deg \gamma} = \left(\Norm \alpha \right) ^{1/2 \deg \alpha}$.
\end{proof}

Proposition \ref{Honda-Tate} shows that we need to understand the possibilities for the norm of a totally real algebraic integer whose conjugates lie in the interval $[(\sqrt{q}-1)^2,(\sqrt{q}+1)^2]$. The trivial inequalities $(\sqrt{q}-1)^{2\deg \alpha}\leqslant \Norm(\alpha) \leqslant (\sqrt{q}+1)^{2\deg \alpha}$ are equivalent to the Weil estimates under the correspondence of Proposition \ref{Honda-Tate}. We will produce totally real integers with almost extremal norms in Proposition \ref{Chebyshev} by utilizing shifted Chebyshev polynomials of Lemma \ref{Chebyshev-lemma}; Proposition \ref{Chebyshev} combined with Theorem \ref{Weil} gives Proposition \ref{Chebyshef}.

\begin{lemma}\label{Chebyshev-lemma}
	Let $T_n$ denote the Chebyshev polynomial of degree $n \geqslant 1$, and let $P_n$ be the integer monic polynomial defined by $P_n\colonequals T_n(x/2 - 1)$. Then for $N > 1$ we have:
	\[N+2 - 1/N \leqslant \lim_{n \to \infty}|P_n(-N)|^{1/n} \leqslant N+2.\] 
\end{lemma}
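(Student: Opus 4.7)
The plan is to use the closed form for Chebyshev polynomials outside the interval $[-1,1]$ to extract the dominant exponential behavior of $|P_n(-N)|$ explicitly, and then to verify the two bounds by elementary algebra. Recall that for real $y$ with $|y| \geqslant 1$ one has
\[
T_n(y) \;=\; \tfrac{1}{2}\bigl((y+\sqrt{y^2-1})^n + (y-\sqrt{y^2-1})^n\bigr),
\]
which can be derived from $T_n(\cos\theta)=\cos(n\theta)$ by the substitution $z=e^{i\theta}$ with $y=(z+z^{-1})/2$, or proved by induction from the three-term recurrence. I would specialize this to $y=-N/2-1$, which for $N>1$ lies in $(-\infty,-3/2)$, so the formula applies.

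Next I would isolate the dominant term. The two quantities $y\pm\sqrt{y^2-1}$ are real and multiply to $1$, hence are reciprocals; exactly one has modulus strictly greater than $1$, and for $y<-1$ this is the more negative root $y-\sqrt{y^2-1}$. Writing $L$ for the limit in question, the contribution of the smaller root decays geometrically, giving
\[
L \;=\; \bigl|\,y - \sqrt{y^2-1}\,\bigr| \;=\; \tfrac{N+2}{2} \;+\; \tfrac{1}{2}\sqrt{(N+2)^2-4},
\]
where I have used the identity $(N/2+1)^2 - 1 = \tfrac{1}{4}\bigl((N+2)^2-4\bigr)$.

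It remains to sandwich $L$. The upper bound $L \leqslant N+2$ is immediate from $\sqrt{(N+2)^2-4} \leqslant N+2$. For the lower bound $L \geqslant N+2-1/N$, after rearrangement the claim becomes $\sqrt{(N+2)^2-4} \geqslant (N+2) - 2/N$; for $N>1$ the right-hand side is positive, so one may square and reduce the inequality to $2N \geqslant 1$, which holds. I do not foresee any genuine obstacle: the only subtle points are choosing the correct root (the one of larger modulus) when applying the closed form on $(-\infty,-1)$, and checking positivity before squaring in the lower bound.
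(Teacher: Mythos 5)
Your proposal is correct and follows essentially the same route as the paper: both use the closed form $T_n(y)=\tfrac12\bigl((y+\sqrt{y^2-1})^n+(y-\sqrt{y^2-1})^n\bigr)$ to identify the limit as $\tfrac{N+2}{2}+\tfrac12\sqrt{(N+2)^2-4}$ and then deduce the two bounds. You merely spell out the elementary algebra (choice of dominant root, squaring step) that the paper compresses into "by calculus ... the conclusion follows."
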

\begin{proof}
	We use the formula 	
	\[T_n(y)=\frac{1}{2}\left(\left(y+\sqrt{y^2-1}\right)^n+\left(y-\sqrt{y^2-1}\right)^n\right),\] 
	see \cite[Equation~1.49]{Mason-Handscomb2002}.
	By calculus, for $M>1$ we have $\lim_{n \to \infty} |T_n(-M)|^{1/n}=M+\sqrt{M^2-1}$. Substituting $M=1+N/2$ gives an explicit formula for $\lim_{n \to \infty}|P_n(-N)|^{1/n}$, and the conclusion follows.
\end{proof}

\begin{proposition}\label{Chebyshev}
	The numbers $a(q)$, $A(q)$ satisfy \[a(q) \leqslant \lceil(\sqrt{q}-1)^2 \rceil+2,\ \ \ A(q) \geqslant \lfloor(\sqrt{q}+1)^2 \rfloor-2-q^{-1}.\]
\end{proposition}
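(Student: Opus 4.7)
The plan is to apply Proposition \ref{Honda-Tate}, which translates both claims into statements about the normalized constant term $|f(0)|^{1/\deg f}$ of monic irreducible integer polynomials $f$ whose roots all lie in $[(\sqrt q - 1)^2,(\sqrt q + 1)^2]$. I need to produce infinitely many such $f$ with normalized constant term approaching $\lceil(\sqrt q - 1)^2\rceil + 2$ from below (for $a(q)$) or approaching $\lfloor(\sqrt q + 1)^2\rfloor - 2 - q^{-1}$ from above (for $A(q)$). The shifted Chebyshev polynomials $P_n$ of Lemma \ref{Chebyshev-lemma} are the natural source of test polynomials: they are monic with integer coefficients, their simple real roots cover $[0,4]$, and Lemma \ref{Chebyshev-lemma} pins down $|P_n(-N)|^{1/n}$ in the limit.

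For the upper bound on $a(q)$, I set $N := \lceil(\sqrt q - 1)^2\rceil$ and take $Q_n(x) := P_n(x - N)$. Its roots lie in $[N,N+4]$, which is contained in $[(\sqrt q - 1)^2,(\sqrt q + 1)^2]$ because $N \leq (\sqrt q - 1)^2 + 1$ and the target interval has length $4\sqrt q \geq 5$ for every prime power $q \geq 2$. Lemma \ref{Chebyshev-lemma} then yields $|Q_n(0)|^{1/n} = |P_n(-N)|^{1/n} \to L \leq N+2$. For the lower bound on $A(q)$, I set $K := \lfloor(\sqrt q + 1)^2\rfloor$ and take $Q_n(x) := P_n(x - (K-4))$, whose roots lie in $[K-4,K] \subseteq [(\sqrt q - 1)^2,(\sqrt q + 1)^2]$ by a parallel check. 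Lemma \ref{Chebyshev-lemma} then gives $|P_n(-(K-4))|^{1/n} \to L \geq (K-4) + 2 - 1/(K-4) = K - 2 - 1/(K-4)$, and $K - 4 \geq q$ follows from $K = \lfloor q + 2\sqrt q + 1\rfloor$ whenever $q \geq 3$, yielding the required bound $K - 2 - q^{-1}$. The edge case $q = 2$ uses the explicit value $(3+\sqrt 5)/2$ of the limit produced in the proof of Lemma \ref{Chebyshev-lemma}, which exceeds $5 - 2 - 1/2$.

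Factoring $Q_n = \prod_i f_i$ into monic irreducibles (distinct because $P_n$ has simple roots) and invoking Proposition \ref{Honda-Tate} associates to each $f_i$ a simple abelian variety with $\#A(\F_q)^{1/g} = |f_i(0)|^{1/\deg f_i}$. Since $\prod_i |f_i(0)| = |Q_n(0)|$ and $\sum_i \deg f_i = n$, at least one factor realizes a normalized value close to $|Q_n(0)|^{1/n}$. The main obstacle is to upgrade this single-variety bound to the required $\liminf$/$\limsup$ statement, as the small-norm factor could a priori repeat across different $n$. I handle this by contradiction: if $a(q) > \lceil(\sqrt q - 1)^2\rceil + 2$, there exist $\delta > 0$ and a finite set $S$ of irreducibles with normalized constant term at most $N + 2 + \delta$; since $Q_n$ is squarefree, each $f \in S$ divides $Q_n$ at most once, so the $S$-factors contribute a total degree bounded independently of $n$, while every non-$S$ factor satisfies $|f(0)| > (N+2+\delta)^{\deg f}$. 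Taking products and $n$-th roots forces $\lim |Q_n(0)|^{1/n} \geq N + 2 + \delta$, contradicting Lemma \ref{Chebyshev-lemma}. The argument for $A(q)$ is the mirror image, where the trivial bound $|f(0)| \leq (\sqrt q + 1)^{2\deg f}$ controls the finite exceptional family $S$.
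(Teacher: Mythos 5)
Your argument is correct, and it diverges from the paper at the key step. The paper also starts from Lemma \ref{Chebyshev-lemma} and Proposition \ref{Honda-Tate}, but it restricts to prime degree $\ell$ and uses the cyclotomic-type factorization $P_\ell=(x-2)R_\ell$ with $R_\ell$ irreducible, so that each shifted polynomial directly yields a single Galois orbit in $\calA_q'$ (hence infinitely many distinct orbits, of growing degree, with normalized norm tending to within $1/N$ of $N+2$). You instead keep arbitrary $n$, factor the squarefree polynomial $Q_n$ into irreducibles, and rule out the possibility that the small-norm (resp.\ large-norm) factors repeat via a finiteness-plus-pigeonhole contradiction: if only finitely many orbits beat the target value, their bounded total degree cannot account for the limiting behavior of $|Q_n(0)|^{1/n}$ forced by Lemma \ref{Chebyshev-lemma}. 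This buys you independence from the irreducibility fact (your argument would work with any family of monic integer polynomials with simple roots in the interval and controlled $|Q_n(0)|^{1/n}$), at the cost of being non-constructive about which orbits achieve the bound, whereas the paper's route explicitly exhibits them. You are also more careful on two points where the paper is terse or slips: you take $N=\lfloor(\sqrt q+1)^2\rfloor-4$ for the $A(q)$ direction (the paper's ``$\lfloor(\sqrt q-1)^2\rfloor-4$'' is a typo), you justify $1/(K-4)\leqslant q^{-1}$ for $q\geqslant 3$, and you handle $q=2$, where $K-4=1$ falls outside the lemma's hypothesis $N>1$, by the explicit value $(3+\sqrt5)/2$. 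The same $N=1$ technicality occurs in the $a(q)$ direction for $q\in\{2,3,4\}$ (in the paper as well); there only the upper bound $\lim|P_n(-1)|^{1/n}\leqslant N+2$ is needed, which follows from the formula $M+\sqrt{M^2-1}\leqslant 2M$ in the lemma's proof, so this is a cosmetic fix.
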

\begin{proof}
	To prove either inequality it suffices to construct infinitely many algebraic integers  $\alpha \in \calA_q'$ with geometric mean of the conjugates of $\alpha$ close to the corresponding end of the interval $\left[(1-\sqrt{q})^2, (1+\sqrt{q})^2\right]$.
	Let $N$ be a positive integer. For a prime number $\ell$ let $P_\ell$ be the polynomial of Lemma \ref{Chebyshev-lemma}. Recall that the roots of $T_\ell$ belong to the segment $[-1,1]$, so the roots of $P_\ell$ belong to $[0,4]$. By irreducibility of the cyclotomic polynomial, the monic integer polynomial $P_\ell$ factors as $P_\ell=(x-2)R_\ell$, where $R_\ell$ is irreducible. Let $\alpha_\ell$ denote a root of $R_\ell(x-N)$, then $\alpha_\ell$ and all of its conjugates belong to the interval $[N, N+4]$. The norm of $\alpha_\ell$ satisfies
	\[\lim_{\ell \to \infty} \Norm(\alpha_\ell)^{1/\deg \alpha_\ell}=\lim_{\ell \to \infty}|R_\ell(-N)|^{1/(\ell-1)}=\lim_{\ell \to \infty}|{P_\ell}(-N)|^{1/\ell}.\]
	The right hand side is within $1/N$ of $N+2$ by Lemma \ref{Chebyshev-lemma}. Taking $N=\lceil(1-\sqrt{q})^2\rceil$ produces infinitely many algebraic integers on $\left[(1-\sqrt{q})^2, (1+\sqrt{q})^2\right]$ with geometric mean of the conjugates asymptotically less than $\lceil(1-\sqrt{q})^2\rceil + 2$. Therefore $a(q) \leqslant \lceil(\sqrt{q}-1)^2 \rceil+2$. Similarly, taking $N= \lfloor(\sqrt{q}-1)^2 \rfloor-4,$ gives the estimate on $A(q)$.
\end{proof}

Proposition \ref{Chebyshev} shows that the bounds of Theorem \ref{Weil} are almost tight. We now give a simple proof of a more precise version of Proposition \ref{rho-limit} (the original claim is recovered by taking limits as $q \to \infty$).

\begin{proposition}\label{strong-rho-limit}
	Let $\rho$ be as in Definition \ref{SSS}. Then the following inequalities hold:
	\[ (q+1)^2 - 2 - q^{-2} \leqslant A(q^2) \leqslant (q+1)^2-\rho+O(q^{-1}).\]
\end{proposition}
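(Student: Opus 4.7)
My plan is to handle the two bounds separately. The lower bound $(q+1)^2 - 2 - q^{-2} \leqslant A(q^2)$ is immediate from Proposition \ref{Chebyshev} applied with $q$ replaced by $q^2$: since $\sqrt{q^2}=q\in\Z$, the floor in that proposition is exact, and one reads off precisely the claimed estimate.

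For the upper bound I would pass through Proposition \ref{Honda-Tate} and then apply AM-GM. Each $A\in\mathcal{A}_{q^2}$ of dimension $g$ corresponds to the Galois orbit of a totally positive algebraic integer $\alpha$ with all conjugates in $[(q-1)^2, (q+1)^2]$, and $\#A(\F_{q^2})^{1/g}=\Norm(\alpha)^{1/\deg\alpha}$. Setting $\beta\colonequals(q+1)^2-\alpha$, the conjugates of $\beta$ land in $[0,4q]$, so $\beta$ is totally non-negative. AM-GM applied to the conjugates of $\alpha$, together with linearity of the normalized trace $\tr$, gives
\[\#A(\F_{q^2})^{1/g}=\Norm(\alpha)^{1/\deg\alpha}\leqslant \tr(\alpha) = (q+1)^2-\tr(\beta).\]
Excluding the single rational case $\alpha=(q+1)^2$ (which is the only way a conjugate of $\alpha$ could equal $(q+1)^2$, and forces $\beta=0$), the integer $\beta$ is strictly totally positive, i.e.\ $\beta\in \mathrm{TP}$. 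As $g\to\infty$ we have $\deg\beta\to\infty$, so Definition \ref{SSS} yields $\tr(\beta)\geqslant \rho-o(1)$. Substituting gives $\#A(\F_{q^2})^{1/g}\leqslant (q+1)^2-\rho+o(1)$ for large $g$; taking $\limsup$ produces $A(q^2)\leqslant (q+1)^2-\rho$, which is stronger than the stated bound and so absorbs the $O(q^{-1})$ slack.

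The main subtlety is interpretational rather than computational: one needs to read $\rho=\liminf_{\alpha\in \mathrm{TP}}\tr(\alpha)$ as a liminf taken along $\alpha\in \mathrm{TP}$ with $\deg\alpha\to\infty$, as is standard in the Schur--Siegel--Smyth problem (the set $\mathrm{TP}$ contains the positive integer $1$ with normalized trace $1$, so an unconditional liminf would be useless). With that convention in hand, the ostensible restriction that the conjugates of $\beta$ lie in the bounded interval $[0,4q]$ is harmless, since the liminf over any infinite subset of $\mathrm{TP}$ is at least the liminf over all of $\mathrm{TP}$.
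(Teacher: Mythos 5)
Your proof is correct, and for the upper bound it takes a genuinely different (and in fact sharper) route than the paper. The lower bound is handled identically: both you and the paper just invoke Proposition \ref{Chebyshev} at $q^2$, where the floor is exact. For the upper bound, the paper expands $\log x$ to first order around $(q+1)^2$ on $[(q-1)^2,(q+1)^2]$, averages over the conjugates, and exponentiates; the two Taylor truncations are exactly what produce the $O(q^{-1})$ error in the statement. You instead apply AM--GM directly to the positive conjugates, getting $\Norm(\alpha)^{1/\deg\alpha}\leqslant \tr(\alpha)=(q+1)^2-\tr(\beta)$ with no approximation, and hence the cleaner bound $A(q^2)\leqslant (q+1)^2-\rho$, which implies the stated one. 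You are also more careful than the paper's write-up on two small points: you explicitly exclude the single degenerate class $\alpha=(q+1)^2$ (where $\beta=0$ is not totally positive), and you spell out the convention under which $\rho$ is a limit point (needed since the limsup/liminf over the countable sets $\mathcal{A}_{q^2}$ and $\mathrm{TP}$ must be read as accumulation points; note, though, that a single element like $1\in\mathrm{TP}$ would not affect a liminf in that sense, so your parenthetical justification slightly conflates liminf with infimum). Your remark that the liminf over the infinite subset $\{(q+1)^2-\alpha\}\subset\mathrm{TP}$ dominates $\rho$ already suffices, so the auxiliary claim that $\deg\beta\to\infty$ as $g\to\infty$ (true, but needing a word via Honda--Tate or the finiteness of bounded-degree elements of $\mathcal{A}_{q^2}'$) can simply be dropped.
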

\begin{proof}
	The first inequality follows from Proposition \ref{Chebyshev}. For the second inequality, fix a prime power $q$ and an element $\alpha \in \mathcal{A}_q'$.  For $x \in [(q-1)^2, (q+1)^2]$ Taylor expansion of $\log$ at $(q+1)^2$ gives 
	\begin{equation}\label{TaylorLog}\log x 	\leqslant 2\log (q+1) + \frac{x-(q+1)^2}{(q+1)^2} + O(q^{-3}) 
	\end{equation}
	Applying \ref{TaylorLog} over the conjugates of $\alpha$ and averaging gives the following inequality, where $\mathrm{tr}$ denotes the normalized trace (trace divided by the degree) and the implied constants do not depend on $\alpha$ or $q$:
	\[
	\log(\Norm(\alpha)^{1/\deg \alpha}) \leqslant 2 
	\log(q+1) + \frac{\mathrm{tr}\ (\alpha-(q+1)^2)}{(q+1)^2} + O({q^{-3}}).\]	
	Exponentiating and using Taylor expansion of the exponent we get
	\begin{multline*}
		A(q^2) = \limsup_{\alpha \in \mathcal{A}_q'} \Norm (\alpha)^{1/\deg{\alpha}} \leqslant \limsup_{\alpha\in \mathcal{A}_q'} (q+1)^2 e^{\mathrm{tr}(\alpha-(q+1)^2)/(q+1)^2} + O({q^{-1}})\\
		\leqslant (q+1)^2 + \limsup_{\alpha\in \mathcal{A}_q'} \mathrm{tr} (\alpha - (q+1)^2) + O(q^{-1}).
	\end{multline*}
	
	Since $(q+1)^2-\alpha$ is a totally positive algebraic integer, we have $A(q^2) \leqslant (q+1)^2 -\rho + O(q^{-1})$.
\end{proof}

The following theorem combined with Proposition \ref{Chebyshev} show that $a(q), A(q)$ can be determined up to an error of $1+q^{-1}$.

\begin{theorem}\label{AHL+}
	For all abelian varieties $A \in \mathcal{A}_q$ one of the following holds
	\begin{enumerate}
		\item The element of $\mathcal{A}_q'$ corresponding to $A$ is $\lfloor(\sqrt{q}-1)^2 \rfloor$ or $\lceil(\sqrt{q}+1)^2 \rceil$,
		\item $\lfloor(\sqrt{q}-1)^2 \rfloor + 1 \leqslant \#A(\F_q)^{1/g} \leqslant \lceil(\sqrt{q}+1)^2 \rceil -1$.	
	\end{enumerate}
\end{theorem}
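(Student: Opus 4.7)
The plan is to use Proposition \ref{Honda-Tate} to translate the claim into a statement about a totally real algebraic integer $\alpha$ of degree $n$ with Galois conjugates $\alpha_1,\dots,\alpha_n\in[L,R]\colonequals[(\sqrt{q}-1)^2,(\sqrt{q}+1)^2]$: it must be shown that either $\alpha$ equals the rational integer $\lfloor L\rfloor$, equals $\lceil R\rceil$, or $(\prod_i\alpha_i)^{1/n}\in[\lfloor L\rfloor+1,\lceil R\rceil-1]$.

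For the lower bound, I will shift and exploit integrality. Consider $\alpha-\lfloor L\rfloor$, all of whose conjugates $\alpha_i-\lfloor L\rfloor$ are non-negative. These conjugates are in fact strictly positive unless $\alpha=\lfloor L\rfloor$ as an element of $\mathcal{A}_q'$, a case that only arises when $L\in\Z$ (otherwise $\lfloor L\rfloor\notin[L,R]$); and when $L\in\Z$, any $\alpha_i=L$ would force $\alpha=L$ since the Galois orbit of a rational is a singleton. Outside this exceptional case, $\prod_i(\alpha_i-\lfloor L\rfloor)=|f(\lfloor L\rfloor)|$, where $f$ denotes the minimal polynomial of $\alpha$, is a positive rational integer and hence is at least $1$. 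The target bound then follows from the Mahler-type inequality
\[
\prod_i(\beta_i+c)\geqslant\left(\left(\prod_i\beta_i\right)^{1/n}+c\right)^n,\qquad \beta_i,c\geqslant 0,
\]
which is a one-line consequence of the convexity of $t\mapsto\log(c+e^t)$ (Jensen applied to $t_i=\log\beta_i$), specialized to $\beta_i=\alpha_i-\lfloor L\rfloor$ and $c=\lfloor L\rfloor$.

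The upper bound is symmetric. Using $\lceil R\rceil-\alpha$ in place of $\alpha-\lfloor L\rfloor$, the same integrality argument gives $\prod_i(\lceil R\rceil-\alpha_i)\geqslant 1$ unless $\alpha=\lceil R\rceil\in\mathcal{A}_q'$, and the dual inequality
\[
\prod_i(D-\gamma_i)\leqslant\left(D-\left(\prod_i\gamma_i\right)^{1/n}\right)^n,\qquad 0<\gamma_i<D,
\]
which follows from the concavity of $t\mapsto\log(D-e^t)$ together with AM-GM, applied with $D=\lceil R\rceil$ and $\gamma_i=\lceil R\rceil-\alpha_i$, yields $\prod_i\alpha_i\leqslant(\lceil R\rceil-1)^n$.

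I do not expect a serious obstacle here. Both the integrality of $|f(k)|$ for $k\in\Z$ and the two Mahler-style optimization inequalities are standard and elementary. The only bookkeeping is to track the exceptional integer values $\lfloor L\rfloor$ and $\lceil R\rceil$, which correspond precisely to the only configurations in which strict positivity of the shifted conjugates fails—so these genuinely must appear as exceptions, and the dichotomy in the theorem statement is exactly what the method produces.
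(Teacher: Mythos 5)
Your argument is correct, and it shares the paper's central mechanism: writing $L=(\sqrt{q}-1)^2$, $R=(\sqrt{q}+1)^2$ as you do, both proofs shift $\alpha$ by the nearby integer and exploit that $\prod_i(\alpha_i-\lfloor L\rfloor)=|f(\lfloor L\rfloor)|$ (resp.\ $\prod_i(\lceil R\rceil-\alpha_i)=|f(\lceil R\rceil)|$) is a nonzero rational integer, hence at least $1$, with the exceptional rational values $\lfloor L\rfloor,\lceil R\rceil$ (possible only when $q$ is a square) accounting exactly for case (1). Where you differ is the conversion step. The paper bounds each conjugate separately by the pointwise inequality $x\geqslant(n+1)(x-n)^{1/(n+1)}$ on $(n,\infty)$, i.e.\ the minimum of the auxiliary function $x/(x-n)^{1/(n+1)}$ with $n=\lfloor L\rfloor$, and then multiplies over conjugates; this is the one-polynomial instance of the framework of Lemma \ref{auxiliary}, which the paper then reuses with several polynomials to get the small-$q$ bounds of Theorem \ref{main}. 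You instead apply Jensen to the convex function $t\mapsto\log(c+e^t)$ (and its concave counterpart $t\mapsto\log(D-e^t)$), i.e.\ Mahler's superadditivity of geometric means, which is a global inequality on the whole conjugate vector rather than a term-by-term one; your positivity checks ($\alpha_i>\lfloor L\rfloor$ unless $\alpha=\lfloor L\rfloor$, and $0<\lceil R\rceil-\alpha_i<\lceil R\rceil$ since $\alpha_i\geqslant(\sqrt{q}-1)^2>0$) are precisely what is needed for the logarithms. Both routes give the stated dichotomy; in fact yours yields the marginally sharper bounds $\Norm(\alpha)^{1/\deg\alpha}\geqslant\lfloor L\rfloor+|f(\lfloor L\rfloor)|^{1/\deg\alpha}$ and $\Norm(\alpha)^{1/\deg\alpha}\leqslant\lceil R\rceil-|f(\lceil R\rceil)|^{1/\deg\alpha}$, though this extra strength is not needed, while the paper's pointwise formulation has the advantage of generalizing directly to products of several auxiliary polynomials as in Lemma \ref{auxiliary}.
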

\begin{proof}
	We want to estimate the norm of an algebraic integer  $\alpha \in \mathcal{A}_q'$. First we derive the lower bound. By calculus, for every integer $n>0$ the function
	\[x \mapsto \frac{x}{(x-n)^{1/(n+1)}}\]
	on the open interval $(n, +\infty)$ has minimum $n+1$ at the point $x=n+1$. Let $n\colonequals \lfloor (\sqrt{q}-1)^2\rfloor$ and let $\alpha \in \Qbar \setminus \{n\}$ be an algebraic integer such that $\GalQ \alpha \in \mathcal{A}_q'$. Let $\alpha_1,\alpha_2, ..., \alpha_d$ denote the conjugates of $\alpha$. We have
	
	\[\frac{\Norm \alpha}{ \prod_{i} (\alpha_i-n)^{1/(n+1)}} = \prod_i \frac{\alpha_i}{(\alpha_i-n)^{1/(n+1)}} \geqslant (n+1)^d.\]
	Since $\alpha$ is an algebraic integer, the product $\prod_i(\alpha_i-n)$ is a rational integer and therefore $\left|\prod_i(\alpha_i-n)\right|\geqslant 1$. Hence if $A \in \calA_q$ is an abelian variety corresponding to $\alpha$, then $\#A(\F_q)^{1/\dim A}=\left(\Norm \alpha\right)^{1/d} \geqslant (n+1)$. 
	To get an upper bound, let $N=\lceil (\sqrt{q}+1)^2 \rceil$. The function $x(N-x)^{1/(N-1)}$ has a maximum of $N-1$ at $x=N-1$. A similar argument shows that for every $\alpha \in \mathcal{A}_q'$, $\alpha \neq N$ we have $\Norm \alpha \leqslant (N-1)^d$.
\end{proof}

Theorem \ref{AHL+} improves on Theorem \ref{AHL} when $q$ is a square. In particular, combining Theorem \ref{AHL+} with Proposition \ref{Chebyshev} determines $a(q^2),A(q^2)$ up to an error of $1/2+q^{-2}$. 

It may be possible to improve Theorem \ref{AHL+} by replacing the function $x/(x-n)^{1/(n+1)}$ by a different auxiliary function. We do not know if this can be done for large $q$. However, in the next section we find better auxiliary functions for small $q$ and use them to improve the Weil estimates.

\section{Abelian varieties over small fields}

The following lemma gives a general form of the auxiliary function method used in the proof of Theorem \ref{AHL+}.

\begin{lemma}\label{auxiliary}
	Suppose that for some positive $A,B,m,M \in \R$, some monic integer polynomials $P_1, ..., P_n$, $Q_1, ..., Q_m \in \Z[x]$, and some positive  $\gamma_1, ..., \gamma_n, \beta_1, ..., \beta_m \in \R,$ the following inequalities hold for all $x \in [A,B]:$
	\[\frac{x}{\prod_i |P_i(x)|^{\gamma_i}} \geqslant m\]
	\[x\prod_j |Q_j(x)|^{\beta_j} \leqslant M.\]
	Suppose that $\alpha$ is an algebraic integer whose conjugates lie in $[A,B]$ and such that $P_i(\alpha), Q_j(\alpha) \neq 0$ for all $i,j$. Then \[m \leqslant \Norm(\alpha)^{1/\deg \alpha} \leqslant M.\]
\end{lemma}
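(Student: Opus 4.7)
The plan is to apply the two pointwise inequalities to each Galois conjugate of $\alpha$ and then take products. To begin, let $\alpha_1, \ldots, \alpha_d$ denote the Galois conjugates of $\alpha$; by hypothesis every $\alpha_k$ lies in $[A,B]$, and in particular each $\alpha_k > 0$.

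For the lower bound, I would evaluate $\frac{x}{\prod_i |P_i(x)|^{\gamma_i}} \geqslant m$ at $x = \alpha_k$ and multiply the resulting $d$ inequalities, obtaining
\[
\frac{\prod_{k=1}^d \alpha_k}{\prod_i \left(\prod_{k=1}^d |P_i(\alpha_k)|\right)^{\gamma_i}} \geqslant m^d.
\]
The numerator equals $\Norm(\alpha)$, which is positive since $A>0$. For each $i$, the monicity and integrality of $P_i$ together with the fact that $\alpha$ is an algebraic integer imply that $P_i(\alpha)$ is an algebraic integer with conjugates $P_i(\alpha_1), \ldots, P_i(\alpha_d)$, so $\prod_{k=1}^d P_i(\alpha_k) = \Norm(P_i(\alpha)) \in \Z$. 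The assumption $P_i(\alpha) \neq 0$ forces this integer to be nonzero, hence $|\Norm(P_i(\alpha))| \geqslant 1$. Substituting these lower bounds and taking $d$-th roots would yield $\Norm(\alpha)^{1/d} \geqslant m$, as desired.

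The upper bound follows from an entirely symmetric argument: apply $x\prod_j |Q_j(x)|^{\beta_j} \leqslant M$ at each conjugate of $\alpha$, take the product, and use $|\Norm(Q_j(\alpha))| \geqslant 1$ (guaranteed by the monic-integer hypothesis on $Q_j$ together with $Q_j(\alpha) \neq 0$) to conclude $\Norm(\alpha) \leqslant M^d$, i.e. $\Norm(\alpha)^{1/d} \leqslant M$. There is no genuine obstacle here; the lemma is essentially a packaged form of the averaging-over-conjugates device that already drove the proof of Theorem~\ref{AHL+}. The only point that requires care is identifying $\prod_{k} P_i(\alpha_k)$ and $\prod_k Q_j(\alpha_k)$ with rational integers of absolute value at least one, which is precisely where the monicity and integrality of the $P_i, Q_j$, the integrality of $\alpha$, and the non-vanishing assumptions all enter the hypotheses.
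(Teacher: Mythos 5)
Your proof is correct and is essentially the paper's own argument: evaluate the pointwise inequality at each conjugate, multiply, and use that $\prod_k P_i(\alpha_k)$ (resp.\ $\prod_k Q_j(\alpha_k)$) is a nonzero rational integer, hence of absolute value at least $1$. The only cosmetic remark is that $\prod_k P_i(\alpha_k)$ is the norm of $P_i(\alpha)$ from $\Q(\alpha)$ to $\Q$ (a power of the norm attached to the Galois orbit of $P_i(\alpha)$, whose conjugates may appear with multiplicity), but this does not affect the argument.
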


\begin{proof}
	We will prove the lower bound, the upper bound can be derived similarly. Let $\{\alpha_1,...,\alpha_d\}$ be the Galois orbit of $\alpha=\alpha_1$. Since $P_i$ is a monic integer polynomial, the value of the product $\Pi_j|P_i(\alpha_j)|$ is a nonzero integer for every $i$. Therefore
	\[\Norm \alpha \geqslant \frac{\Norm \alpha}{\prod_{i} \prod_{j} |P_i(\alpha_j)|^{\gamma_i}}= \prod_j\left(\frac{\alpha_j}{\prod_i P_i(\alpha_j)^{\gamma_i}}\right)\geqslant m^d.\qedhere\] 
\end{proof}

We apply Lemma \ref{auxiliary} to give bounds for $a(q)$ and $A(q)$ for small values of $q$.

\begin{theorem}\label{main}
	For all but finitely many simple abelian varieties of dimension $g$ over $\F_q$ the inequalities $m(q)\leqslant \#A(\F_q)^{1/g} \leqslant M(q)$ hold, where the values of $m(q), M(q)$ are given in Table \ref{M}. The minimal polynomials $P_i, Q_j$ of elements of $\mathcal{A}_q'$ that do not satisfy the lower and the upper bound, respectively, are exactly the polynomials of Table \ref{a(q)} marked with an asterisk.\\
	\begin{table}[h]
		\begin{center}
			\begin{tabular*}{94pt}{c|cc}
				\toprule
				$q$  & $m(q)$ & $M(q)$\\
				\midrule
				2 & 1 & 4.035\\
				3 & 1.359 &5.634\\
				4 & 2.275& 7.382\\
				5 & 2.7 & 8.835\\
				7 &3.978 & 11.734\\
				8 & 4.635& 13.05\\
				9 & 5.47& 14.303\\
				\bottomrule
			\end{tabular*}\caption{Lower and upper bounds on $\#A(\F_q)^{1/g}$}\label{M} 
		\end{center}
	\end{table}
\end{theorem}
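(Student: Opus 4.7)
The plan is to apply Lemma \ref{auxiliary} with carefully chosen auxiliary polynomials $P_i$ (for the lower bound) and $Q_j$ (for the upper bound), separately for each value $q \in \{2,3,4,5,7,8,9\}$. The one-factor choice $P(x) = x - \lfloor(\sqrt{q}-1)^2\rfloor$ used to prove Theorem \ref{AHL+} yields only the bound $\lfloor(\sqrt{q}-1)^2\rfloor+1$, which is vacuous for $q \in \{2,3,4\}$; to reach the constants in Table \ref{M} one has to combine several factors with carefully tuned positive real weights $\gamma_i$, $\beta_j$.

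First I would enumerate, for each $q$, the small-degree monic integer polynomials all of whose roots lie in the Weil interval $I_q \colonequals [(\sqrt{q}-1)^2,(\sqrt{q}+1)^2]$; these are precisely the potential minimal polynomials of low-degree elements of $\calA_q'$, and they serve both as candidates for the auxiliary factors and as candidates for the exceptional minimal polynomials. For $q=3$, for instance, $I_3 \approx [0.54, 7.46]$, so the linear candidates are $x - k$ for $k \in \{1, \ldots, 7\}$, together with the minimal polynomials of the finitely many totally real quadratic and cubic integers whose conjugates lie in $I_3$.

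Next, given a finite family $\{P_i\}$, I would solve the one-variable optimization
\[m(q) \colonequals \sup_{\gamma \geqslant 0}\ \inf_{x \in I_q}\ \frac{x}{\prod_i |P_i(x)|^{\gamma_i}}\]
to determine the weights $\gamma_i$; the infimum over $x$ can be located numerically or by equating the derivative of $\log\bigl(x / \prod_i |P_i(x)|^{\gamma_i}\bigr)$ to zero and examining the (finitely many) critical points. The resulting optimum is the constant $m(q)$ of Table \ref{M}, and the inequality $x \geqslant m(q) \prod_i |P_i(x)|^{\gamma_i}$ on $I_q$ is then verified rigorously by evaluating at each critical point, a finite calculation. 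The upper bound is symmetric, with factors $Q_j$ clustered near the right endpoint of $I_q$.

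Once the analytic inequality is established, Lemma \ref{auxiliary} yields the stated bounds for every $\alpha \in \calA_q'$ whose minimal polynomial is distinct from every $P_i$ (resp.\ $Q_j$). Any $\alpha$ that violates the lower bound must therefore have some $P_i$ as its minimal polynomial, pinning down the asterisked entries of Table \ref{a(q)}; each such polynomial contributes one Galois orbit, hence by Proposition \ref{Honda-Tate} one isogeny class in $\calA_q$, and the exceptional set is finite. The main obstacle is the middle step: choosing a good finite family $\{P_i\}$ and certifying the resulting inequality on $I_q$. The choice of polynomials is where the improvement over Theorem \ref{AHL+} is concentrated, and for each small $q$ a guided numerical search, using the heuristic that the optimal $P_i$ should have roots clustered near the left endpoint of $I_q$, should yield the constants in the table.
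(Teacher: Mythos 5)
Your proposal follows essentially the same route as the paper: apply Lemma \ref{auxiliary} on $[(\sqrt{q}-1)^2,(\sqrt{q}+1)^2]$ with auxiliary polynomials taken to be minimal polynomials of low-degree, small-norm elements of $\calA_q'$, choose the weights $\gamma_i,\beta_j$ by optimizing the worst case over the interval (the paper does this via linear programming on a fine mesh, then certifies the minimum by calculus), and observe that any violating $\alpha$ must have one of the $P_i$ (resp.\ $Q_j$) as its minimal polynomial. The only small refinement worth noting is that not every auxiliary polynomial is an actual exception, so one finishes by directly checking which of the $P_i,Q_j$ violate the bounds --- these are the asterisked entries of Table \ref{a(q)}.
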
 
\begin{proof}
	For every $q$ we obtain the bounds by applying Lemma \ref{auxiliary} to the segment $[(\sqrt{q}-1)^2, (\sqrt{q}+1)^2]$ and using the auxiliary polynomials $P_i, Q_j$ listed in Table \ref{a(q)}; the corresponding parameters $\gamma_i, \beta_j$ are in Table \ref{alphas}. We know explain how the auxiliary functions $P_i$ and corresponding parameters $\gamma_i$ were found, the values of $Q_j$, $\beta_j$ were obtained similarly.
	
	The auxiliary functions were found by searching for algebraic integers in $\calA_q'$ with small degree and small norm and taking $P_i$ to be equal to the corresponding minimal polynomials. The parameters $\gamma_i$ were then chosen by solving a linear programming problem of maximizing the minimum of the auxiliary function of Lemma \ref{auxiliary} on a fine mesh $S \subset [(\sqrt{q}-1)^2, (\sqrt{q}+1)^2]$. Explicitly, after $S$ and $P_i$ are fixed, the values of $\gamma_i$ were found at which \[\max_{\gamma_i} \min_{x \in S} \left(\log x - \sum \gamma_i \log P_i(x)\right)\]
	is attained. Finally, the values of $\gamma_i$ were used to create the auxiliary function $x \cdot \prod_i |P_i(x)|^{-\gamma_i}$ and find its minimum using calculus.
\end{proof}

\begin{table}[h]
	\begin{center}
		\begin{tabularx}{\textwidth}{c|Y|Y}
			\toprule
			$q$  &  $P_i \in \calA_q'$ & $Q_j \in \calA_q'$\\
			\midrule
			$2$ & n/a & \textcolor{red}{$x-5$*},   $x^2-9x+19$,   $x^3-13x^2+54x-71$\\[0.3cm]
			
			$3$ &{\textcolor{red}{$x-1$*}},   $x^2-4x+2$,   $x^3-7x^2+12x-5$ & \textcolor{red}{$x-7$*},   \textcolor{red}{$x-6$*},   $x^2-12x+34$  \\[0.3cm]
			
			$4$&\textcolor{red}{$x-1$*},  \textcolor{red}{$x-2$*},   \textcolor{red}{$x^2-5x+5$*},  $x^3-8x^2+19x-13$ &\textcolor{red}{$x-9$*},  \textcolor{red}{$x-8$*},  \textcolor{red}{$x^2-15x+55$*}, \textcolor{red}{$x^3-22x^2+159x-377$*}\\[0.7cm]
			$5$ & \textcolor{red}{$x-2$*}, \textcolor{red}{$x^2-6x+7$*}, $x^3-10x^2+28x-23$, $x^3-10x^2+30x-26$ & \textcolor{red}{$x-10$*}, \textcolor{red}{$x-9$*}, \textcolor{red}{$x^2-18x+79$*}, $x^2-17x+71$, $x^2-17x+69$\\[0.7cm]
			$7$ & \textcolor{red}{$x-3$*}, $x^2-10x+23$, $x^3-13x^2+54x-71$, $x^3-14x^2+61x-83$ & \textcolor{red}{$x-13$*}, \textcolor{red}{$x-12$*}, $x^2-23x+131$, $x^2-22x+119$, $x^3-35x^2+406x-1561$, $x^3-34x^2+381x-1405$, $x^3-34x^2+379x-1379$ \\[1.2cm]
			$8$&$x-3$, \textcolor{red}{$x-4$*}, \textcolor{red}{$x^2-9x+19$*}, $x^2-10x+23$  \textcolor{red}{$x^3-15x^2+68x-97$*}, $x^3-15x^2+71x-107$ & \textcolor{red}{$x-14$*}, \textcolor{red}{$x^2-27x+181$*}\\[0.7cm]
			
			$9$& \textcolor{red}{$x-4$*}, \textcolor{red}{$x-5$*}, $x-6$, \textcolor{red}{$x^2-11x+29$*}, $x^2-12x+33$, $x^2-12x+34$ & \textcolor{red}{$x-16$*}, \textcolor{red}{$x-15$*}, \textcolor{red}{$x^2-129x+209$*}, $x^3-43x^2+614x-2911$\\
			\bottomrule
		\end{tabularx}\caption{Auxiliary polynomials for Theorem \ref{main}.}\label{a(q)} 
	\end{center}
	
\end{table}

\begin{table}[!htbp]
	\begin{center}
		\begin{tabularx}{\linewidth}{c|Yl}
			\toprule
			$q$ & $\gamma_i$ & $\beta_j$\\
			\midrule
			$2$  & n/a& $0.141$,  $0.23$, $0.09$\\
			
			$3$  & $0.306$, $0.199$, $0.019$, $0.05$, $0.108$ & $0.1445$,  $0.155$, $0.099$\\
			
			$4$  & $0.37$,  $0.12$, $0.065$, $0.01$& $0.054$, $0.112$, $0.02$, $0.08$\\
			$5$ 
			& $0.323$, $0.063$, $0.062$, $0.007$ & $0.11$, $0.08$, $0.066$, $0.001$, $0.003$\\
			$7$  & $0.289$, $0.0048$, $0.0457$, $0.0178$ & $0.055$, $0.033$, $0.026$, $0.003$, $0.035$, $0.009$, $0.006$\\
			
			$8$ & $0.044$, $0.13$, $0.09$, $0.01$, $0.02$ & $0.08$, $0.04$\\

			$9$& $0.15$, $0.08$, $0.02$, $0.03$, $0.002$, $0.003$ & $0.033$, $0.037$, $0.033$, $0.02$ \\
			\bottomrule
		\end{tabularx}\caption{Auxiliary parameters for Theorem \ref{main}}\label{alphas} 
	\end{center}
	
\end{table}

We apply the results of Theorem \ref{main} to bound $\#A(\F_3)[2]$, the number of rational $2$-torsion points  on a simple abelian variety $A$; a similar result for Jacobians is \cite[Theorem~7.1]{Bhargava-et.al-preprint}.

\begin{corollary}
	For all but finitely many simple abelian varieties $A$ over $\F_3$ we have \[A(\F_3)[2] \leqslant 3.782^g.\] 
\end{corollary}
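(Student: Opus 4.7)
The plan is to reduce the corollary to Theorem~\ref{main} applied at $q=9$, via the intermediate inequality
\[ \#A(\F_3)[2]^2 \leqslant \#A(\F_9), \]
which holds for any abelian variety $A/\F_3$. Combined with the bound $\#A(\F_9)^{1/g} \leqslant M(9) = 14.303$ (which will come from Theorem~\ref{main}), taking square roots yields $\#A(\F_3)[2]^{1/g} \leqslant \sqrt{14.303} < 3.782$.

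To prove the key inequality, I would pass to the quadratic twist. Let $A'$ denote the quadratic twist of $A$ by the nontrivial character of $\Gal(\F_9/\F_3)$. Since this character takes values in $\{\pm 1\}$ and $-1 \equiv 1 \pmod 2$, the twist acts trivially on 2-torsion, so $A[2] = A'[2]$ as $\F_3$-group schemes and in particular $\#A[2](\F_3) = \#A'[2](\F_3)$. The trivial containments $A[2](\F_3) \subset A(\F_3)$ and $A'[2](\F_3) \subset A'(\F_3)$ then multiply to give
\[ \#A(\F_3)[2]^2 \leqslant \#A(\F_3) \cdot \#A'(\F_3). \]
Since the Frobenius eigenvalues of $A'$ are the negatives $-\pi_i$ of those of $A$, the product on the right equals $\prod_i (1-\pi_i)(1+\pi_i) = \prod_i(1-\pi_i^2) = \#A(\F_9)$, giving the inequality.

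For the bound on $\#A(\F_9)$, I would use that since $A$ is simple over $\F_3$, its base change $A \times_{\F_3} \F_9$ is isotypic: if $\gamma$ is the Weil number of $A$, then $A \times_{\F_3} \F_9$ is isogenous to $B^e$ for the simple abelian variety $B$ over $\F_9$ with Weil number $\gamma^2$ (the characteristic polynomial of Frobenius factors as a power of the minimal polynomial of $\gamma^2$), and $e \dim B = g$. Hence $\#A(\F_9)^{1/g} = \#B(\F_9)^{1/\dim B}$. Theorem~\ref{main} at $q=9$ bounds the latter by $M(9) = 14.303$ for all but finitely many simple $B$ over $\F_9$; and each such exceptional $B$ arises from at most two simple $A/\F_3$ (namely those whose Weil number is a square root of that of $B$), so only finitely many simple $A$ over $\F_3$ are exceptional. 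The only conceptual input is the twist inequality $\#A(\F_3)[2]^2 \leqslant \#A(\F_9)$; once this is noted, the rest is a direct invocation of Theorem~\ref{main} at $q=9$.
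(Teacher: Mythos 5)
Your proposal is correct and follows essentially the same route as the paper: both reduce the corollary to the inequality $\#A(\F_3)[2] \leqslant \#A(\F_9)^{1/2}$ and then invoke Theorem~\ref{main} at $q=9$, noting $\sqrt{14.303} < 3.782$. Your quadratic-twist derivation of the key inequality is just a repackaging of the paper's argument that a rational $2$-torsion point lies in $\ker(\Frob-1)\cap\ker(\Frob+1)$ (since $\#A'(\F_3)=\deg(\Frob+1)$ and $\#A(\F_3)=\deg(\Frob-1)$), and your explicit handling of the isotypic base change $A_{\F_9}\sim B^e$ and the finitely many exceptional classes carefully fills in a step the paper leaves implicit.
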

\begin{proof}
	Let $\Frob$ denote the $q=3$-power Frobenius endomorphism of $A$. A rational $2$-torsion point is in the kernel of both $\Frob+1$ and $\Frob-1$. Therefore \[\#A(\F_3)[2] \leqslant \sqrt{\deg(\Frob-1)\deg(\Frob+1)} = \deg (\Frob^2-1)^{1/2}=\#A(\F_{3^2})^{1/2}.\] Applying Theorem \ref{main} to the right hand side proves the claimed inequality.
\end{proof}
Given a field extension $L/K$ and a scheme $X$ over $K$, a point $x \in X(L)$ is called \slantsf{new} if $x \not\in X(F)$ for all fields $F$ with $K \subseteq F \subsetneq L$.
\begin{corollary}
	Let $A/\F_q$ be a simple abelian variety. Then $A$ has no new points over $\F_{q^r}$ if and only if one of the following holds:
	\begin{enumerate}
		\item $r=2$, $q \in \{2,3,4\}$, and $A$ is the quadratic twist of an abelian variety $A'$ with $A'(\F_q)=0$ (these are described in Theorem \ref{main} for $q=3,4$, and in \cite{Madan-Pal1977} for $q=2$),
		
		\item\label{case2} $r=3$, $q=2$, and the element of $\calA_q'$ corresponding to $A$ is $4$ or $5$.
		
	\end{enumerate}
\end{corollary}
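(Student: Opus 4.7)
The plan is to translate the condition ``$A$ has no new points over $\F_{q^r}$'' into an arithmetic statement about the element $\alpha\in\calA_q'$ corresponding to $A$, and then analyze it case by case on $r$. Since every intermediate field of $\F_{q^r}/\F_q$ has the form $\F_{q^s}$ with $s\mid r$, the condition reads $A(\F_{q^r})=\bigcup_{s\mid r,\,s<r}A(\F_{q^s})$, which for $r$ prime collapses to $\#A(\F_{q^r})=\#A(\F_q)$. Writing $\alpha^{(n)}\colonequals(1-\gamma^n)(1-q^n/\gamma^n)$ for the Weil number $\gamma$, the $n$-th Dickson polynomial gives $\alpha^{(n)}=1+q^n-D_n(1+q-\alpha,q)\in\Z[\alpha]$, and Proposition~\ref{Honda-Tate} yields $\#A(\F_{q^n})^{1/g}=\Norm(\alpha^{(n)})^{1/\deg\alpha}$. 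Factoring $\alpha^{(r)}=\alpha\cdot\Phi_r(\alpha)$ with $\Phi_r\in\Z[x]$, the prime-$r$ condition becomes $\Norm(\Phi_r(\alpha))=1$.

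For $r=2$, direct expansion gives $\Phi_2(\alpha)=2(1+q)-\alpha$, which is precisely the element of $\calA_q'$ corresponding to the quadratic twist $A'$ of $A$; the condition is $\#A'(\F_q)=1$. For $q\ge 5$ the Weil interval $[(\sqrt q-1)^2,(\sqrt q+1)^2]$ lies strictly above $1$, so every conjugate of $\alpha'$ exceeds $1$ and $\Norm(\alpha')>1$, ruling out solutions. For $q\in\{3,4\}$ the solutions are exactly the $A'$ whose element of $\calA_q'$ appears as an asterisked $P_i$ of Table~\ref{a(q)} with norm one, giving the description in Theorem~\ref{main}; the case $q=2$ is the content of \cite{Madan-Pal1977}.

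For $r=3$, the analogous expansion (using $D_3(u,q)=u^3-3qu$) yields
\[
\Phi_3(\alpha)=\alpha^2-3(1+q)\alpha+3(1+q+q^2)=\Bigl(\alpha-\tfrac{3(1+q)}{2}\Bigr)^2+\tfrac{3(q-1)^2}{4},
\]
with real minimum $3(q-1)^2/4$. For $q\ge 3$ this minimum is $\ge 3>1$, so $\Norm(\Phi_3(\alpha))>1$ and there are no solutions. For $q=2$, the identity $\Phi_3(x)=(x-4)(x-5)+1$ gives $\Phi_3(x)>|(x-4)(x-5)|$ strictly on the Weil interval $[3-2\sqrt 2,3+2\sqrt 2]$; the only subtlety is the subinterval $x\in(4,5)$, on which $\Phi_3(x)-|(x-4)(x-5)|=2(x-4)(x-5)+1\ge\tfrac12$. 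In the style of Lemma~\ref{auxiliary}, this forces $\Norm(\Phi_3(\alpha))>|\Norm_{\Q(\alpha)/\Q}((\alpha-4)(\alpha-5))|\ge 1$ whenever $\alpha\notin\{4,5\}$, while $\alpha=4$ and $\alpha=5$ both give $\Phi_3(\alpha)=1$, yielding the solutions in case~(2).

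For prime $r\ge 5$ or composite $r$ one argues similarly: for prime $r\ge 5$ a direct lower bound showing $\Phi_r(x)>1$ on the Weil interval (valid for every $q\ge 2$) rules out solutions; for composite $r$ the no-new-points condition forces $\#A(\F_{q^r})=\#A(\F_{q^{r/p}})$ for some prime $p\mid r$, and after passing to the simple factor of the isogeny decomposition of $A/\F_{q^{r/p}}$ one reduces to a case already treated. The main obstacle is verifying the strict inequality $\Phi_3(x)>|(x-4)(x-5)|$ on the full Weil interval for $q=2$---the interval is wide relative to the target value $1$---together with the analogous lower bounds $\Phi_r>1$ on the Weil interval for prime $r\ge 5$; once those elementary but delicate calculus estimates are in hand, the rest is bookkeeping.
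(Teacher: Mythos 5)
Your treatment of the prime-exponent cases is a genuinely different route from the paper's and, for those cases, arguably cleaner: the paper reduces $r>2$ to finitely many pairs $(q,r)$ by the Weil counting bound $\#A(\F_{q^r})\leqslant\sum_{d\mid r,\,d<r}\#A(\F_{q^d})$ and then invokes Theorem \ref{main} plus unspecified finite checks, whereas your identity $\#A(\F_{q^r})/\#A(\F_q)=\prod_i\Phi_r(\alpha_i)$ with $\Phi_3(x)=x^2-3(1+q)x+3(1+q+q^2)=(x-4)(x-5)+1$ (for $q=2$) pins down the exceptions $\alpha\in\{4,5\}$ exactly, with no exceptional-case enumeration, and your completed-square bound $\Phi_3\geqslant 3(q-1)^2/4$ kills $r=3$, $q\geqslant 3$ outright; the $r=2$ reduction to $\#A'(\F_q)=1$ and the exclusion of $q\geqslant 5$ are also fine. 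The asserted bound $\Phi_r>1$ for prime $r\geqslant 5$ is not proved but is easy, e.g.\ $\Phi_r(x)=|1-\gamma^r|^2/|1-\gamma|^2\geqslant\bigl((q^{r/2}-1)/(\sqrt q+1)\bigr)^2>1$.

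The genuine gap is the composite-$r$ case, which you dispose of in one sentence and which is precisely where the work lies. First, ``no new points'' for composite $r$ says $A(\F_{q^r})=\bigcup_{p\mid r}A(\F_{q^{r/p}})$; your claim that this forces equality with a single $A(\F_{q^{r/p}})$ uses that a group is not a union of two proper subgroups, so it needs a separate (size-based) argument when $r$ has three or more prime factors. More seriously, for $(q,r)=(2,4)$ the reduction does not ``reduce to a case already treated'' in a way that closes the argument: the reduced problem (a simple $B/\F_4$ with $B(\F_{16})=B(\F_4)$) \emph{has} solutions, namely $B$ the quadratic twist of a variety with one point, i.e.\ Frobenius $-2$, element $9\in\calA_4'$, and this lifts to a simple $A/\F_2$, the supersingular elliptic curve $y^2+y=x^3$ with element $3\in\calA_2'$, for which $\#A(\F_4)=\#A(\F_{16})=9$. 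So your scheme produces a candidate rather than excluding it, and one must still carry out the final comparison between $A(\F_{16})$ and the points over proper subfields of $\F_{16}$; this is exactly what the paper does at the end of its proof (checking $A(\F_2)\neq A(\F_{16})$ for this curve), and it is also the point where one must be careful about which equality the no-new-point condition actually demands ($A(\F_{16})=A(\F_4)$ under the paper's literal definition of a new point, versus $A(\F_{16})=A(\F_2)$ as used in the paper's final check --- the candidate curve satisfies the former but not the latter). Without this step your proposal does not establish the stated classification for $r=4$, nor does it address the remaining composite pairs $(2,6)$, $(3,4)$ surviving cruder bounds; the paper's uniform counting estimate is what handles those.
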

\begin{proof}
	Suppose $r=2$. The equality $A(\F_{q^2})=A(\F_q)$ is equivalent to the assertion that the quadratic twist $A'$ of $A$ has a unique rational point over $\F_q$. From now on suppose $r>2$.
	
	The Weil conjectures imply that $A$ has a new point when $q$ and $r$ are sufficiently large, as we will now show. Suppose that for some abelian variety $A/\F_q$ of dimension $g$ the set $A(\F_{q^r})$ has no new points. Then the following inequalities hold
	\[\left(q^{r/2}-1\right)^{2g} \leqslant \#A(\F_{q^r}) \leqslant \sum_{d|r, \, d<r} \#A(\F_{q^d}) \leqslant \sum_{d|r, \, d<r} \left(q^{d/2} + 1\right) ^{2g} \leqslant 2 \sqrt{r} \left(q^{r/4}+1\right)^{2g},\] because the number of divisors of $r$ is at most $2\sqrt{r}$.
	So $(q^{r/2} -1)^{2g} \leqslant 2\sqrt{r}(q^{r/4}+1)^{2g}$, which implies $(q^{r/4}-1)^{2g} \leqslant 2\sqrt{r}$. The last inequality together with the condition $r>2$ imply that the pair $(q,r)$ is equal to one of the following $(2, 3),$ $(2,4),$ $(2,5),$ $(2,6),$ $(3,3),$ $(3,4),$ or $(4,3).$ Out of these only the pairs $(2,3), (2,4)$ satisfy the inequality $(q^{r/2}-1)^{2g} \leqslant \sum_{d|r, d<r} (q^{d/2}+1)^{2g}$ for some $g\geqslant 1$. Suppose $q=2, r=3$, then $A(\F_8)=A(\F_2)$. Since $A$ is simple, $A_{\F_8}$ is isotypic: $A_{\F_8} \sim B^e$ with $e \leqslant 3$. Therefore, by Theorem \ref{main} the following inequality holds with finitely many exceptions $\#A(\F_2)^{1/g} \leqslant 4.04 < 4.635 \leqslant \#A(\F_8)^{1/g}$. In each of the exceptional cases we test if $A(\F_2)=A(\F_8)$ by a direct computation; the resulting exceptions are listed in the statement of Case \ref{case2}. Suppose that $q=2$ and $r=4$, which means $A(\F_{16})=A(\F_2)$. Then $A(\F_4)=A(\F_{16})$, and so $A_{\F_4}$ is the quadratic twist of an abelian variety $A'$ with $A'(\F_4)=0$. Theorem \ref{main} implies that the element of $\calA_4'$ corresponding to $A'$ is $1$. A direct computation shows that the element of $\calA_2'$ corresponding to $A$ is $3$ and that for this abelian variety $A(\F_2) \neq A(\F_{16})$.
\end{proof}

%****************************************************************************
\section*{Acknowledgements} 

I thank my advisor Bjorn Poonen for many helpful suggestions. I thank Dinesh Thakur for suggesting the reference \cite{Madan-Pal1977}. Finally I thank Padma Srinivasan, Nicholas Triantafillou, and Dmitri Kubrak for useful comments and discussions.

\begin{bibdiv}
\begin{biblist}

\bibselect{big}

\end{biblist}
\end{bibdiv}

\end{document}